\documentclass[11pt,a4paper]{amsart}

\usepackage[T1]{fontenc}
\usepackage{amssymb}
\usepackage{microtype}
\usepackage{fancyvrb}
\usepackage[left=28mm,right=28mm,top=28mm,bottom=30mm,
            headsep=8mm,footskip=12mm]{geometry}
\usepackage[colorlinks=true,linkcolor=black,citecolor=black,urlcolor=black]{hyperref}

\makeatletter
\@namedef{subjclassname@2020}{%
  \textup{2020} Mathematics Subject Classification}
\makeatother

\newcommand{\R}{\mathbb R}
\newcommand{\Z}{\mathbb Z}
\newcommand{\vol}{\operatorname{vol}}
\newcommand{\vect}[1]{\mathbf{#1}}
-lmtt10 at 7pt
\font\appendixcodenumberfont=ec-lmtt10 at 6pt

\newtheorem{theorem}{Theorem}[section]
\newtheorem{proposition}[theorem]{Proposition}
\newtheorem{lemma}[theorem]{Lemma}

\theoremstyle{definition}
\newtheorem{definition}[theorem]{Definition}
\newtheorem{remark}[theorem]{Remark}
\numberwithin{equation}{section}

\begin{document}

\title[Orthogonality defects of HKZ-reduced bases]
{Orthogonality Defects of HKZ-Reduced Bases}

\author[Z. Li]{Zhixing Li}
\address{School of Mathematics\\ Nanjing University\\
Nanjing 210093, P. R. China}
\email{zhixing.li.math@gmail.com}

\date{}

\begin{abstract}
Let \(D_n\) denote the supremum of the orthogonality defects over
all \(n\)-dimensional Hermite--Korkine--Zolotarev (HKZ)-reduced bases.
This paper shows that
\[
  \lim_{n\to\infty}
  \frac{\log D_n}{n\log n}=2.
\]
We also determine the exact value \(D_4=4375/1024\).
\end{abstract}

\subjclass[2020]{Primary 11H50; Secondary 11H55}

\keywords{lattice, orthogonality defect,
Hermite--Korkine--Zolotarev reduced basis}

\maketitle

\section{Introduction}
\label{sec:introduction}

Let \(\vect{b}_1,\ldots,\vect{b}_n\) be linearly independent vectors in
\(\R^n\).  The
discrete subgroup
\[
 \Lambda=\Lambda(B)
  =
  \left\{z_1\vect{b}_1+\cdots+z_n\vect{b}_n:
  z_1,\ldots,z_n\in\Z\right\}
\]
is an \(n\)-dimensional lattice, and
\(B=(\vect{b}_1,\ldots,\vect{b}_n)\) is an ordered
basis of \(\Lambda\).
An \(n\)-dimensional lattice has infinitely many bases, and any two of them
are related by a unimodular matrix.

The concept of a lattice, which lies at the foundation of the geometry of
numbers, was formally introduced by Minkowski \cite{minkowski1892}.  Its
origins can be traced back to Gauss and Hermite; see \cite{Gruber}.  From 1997
to 1998, three lattice-based cryptosystems, AD \cite{ajtaidwork1997}, GGH, and
NTRU \cite{hoffsteinpiphersilverman1998}, were invented.
In recent years, lattice-based cryptography has become a key player in
post-quantum cryptography; see \cite{Zong2025}.  The GGH cryptosystem,
introduced by O. Goldreich, S. Goldwasser, and S. Halevi \cite{ggh1997},
assumes that the lattices employed have relatively orthogonal bases.  In fact,
finding the most orthogonal basis of a given lattice is a basic problem in
lattice-based cryptography; see \cite{Zong2026}.  It is therefore natural to
ask: \emph{How orthogonal can the best basis of an \(n\)-dimensional lattice
be?}  This problem has already been studied by Hermite \cite{hermite1850},
Minkowski \cite{minkowski1905}, F. Blichfeldt \cite{blichfeldt1914},
L. Bieberbach and I. Schur \cite{bieberbachschur1928}, L. J. Mordell
\cite{mordell1944}, K. Mahler \cite{mahler1938,mahler1946},
B. L. van der Waerden \cite{vanderwaerden1956,vanderwaerden1969}, and many
others.

Let \(\det(\Lambda)\) denote the \emph{determinant} of the lattice, which is
the volume of the fundamental parallelotope \(P\) generated by the basis
\(B=(\vect{b}_1,\ldots,\vect{b}_n)\). \(\det(\Lambda)\) does not depend on the
chosen basis but only on the lattice itself. We define
\begin{equation}
  \Delta(B)
  =
  \frac{\prod_{i=1}^n\lVert \vect{b}_i\rVert^2}
       {\det(\Lambda)^2}=\frac{\prod_{i=1}^n\lVert \vect{b}_i\rVert^2}
       {\vol(P)^2}
  \label{eq:intro-defect}
\end{equation}
and
\[
  \Delta_n=\sup_{\dim\Lambda=n}\;
  \min_{B\text{ a basis of }\Lambda}\Delta(B).
\]
Hadamard's inequality gives \(\Delta(B)\geq1\), with equality if and only if
\(B\) is orthogonal.  Thus, in a certain sense, \(\Delta(B)\) measures the
orthogonality of the basis.  We will call \(\Delta(B)\) the
\emph{orthogonality defect} of the basis \(B\).  Therefore, the natural problem
mentioned above is to determine or estimate the values of \(\Delta_n\).

Lattices and positive definite quadratic forms are equivalent; see
\cite[p.~23]{Zong1999}.  In 1850, Hermite \cite{hermite1850} (see also
\cite[p.~39]{martinet2003}, \cite[Theorem~1.1]{martinet2014}) showed, in terms
of positive definite quadratic forms, that
\[
  \Delta_n \leq
  \left(\frac43\right)^{n(n-1)/2}.
\]
In other words, every \(n\)-dimensional lattice has a basis with
orthogonality defect not more than
\(\left(4/3\right)^{n(n-1)/2}\).  Since then, \(\Delta_n\) has been
studied by applying various reduction theories, including Minkowski reduction
\cite{minkowski1905,blichfeldt1914,bieberbachschur1928,remak1938,
mahler1938,mahler1946,vanderwaerden1956,vanderwaerden1969}, LLL reduction
\cite{lll1982}, and HKZ reduction, which is the focus of this paper.

Building on Hermite's work, Korkine and Zolotarev
\cite{korkinzolotarev1873} introduced the recursive reduction now called
Hermite--Korkine--Zolotarev (HKZ) reduction (the formal definition is given in
Section~2) and proved that every lattice has an HKZ-reduced basis.  We define
\begin{equation}
  D_n
  =
  \sup\bigl\{\Delta(B):
  B\text{ is an \(n\)-dimensional HKZ-reduced basis}\bigr\}.
  \label{eq:intro-Dn}
\end{equation}
Since every lattice admits an HKZ-reduced basis, we have
\[
  \Delta_n  \leq D_n.
\]

Let \(\lambda_1(\Lambda)\) denote the length of the shortest nonzero vector
of \(\Lambda\).  The \(n\)-dimensional Hermite constant is
\[
  \gamma_n
  =
  \sup_{\dim\Lambda=n}
  \frac{\lambda_1(\Lambda)^2}{\det(\Lambda)^{2/n}}.
\]
Since every basis vector has length at least \(\lambda_1(\Lambda)\), one
can easily deduce that
\begin{equation}
  D_n\geq\gamma_n^n.
\label{eq:intro-gamma-lower-bound}
\end{equation}
H.~Bennett \cite[Eq.~(13) and Proposition~7.5]{bennett2018} also proved
\begin{equation}
  D_n\geq4^{-n}n!.
\label{eq:intro-bennett-bound}
\end{equation}
Consequently, using the classical estimate \(
  \gamma_n \geq n/(2\pi e)(1+o(1))
\)
and Stirling's formula, both
\eqref{eq:intro-gamma-lower-bound} and
\eqref{eq:intro-bennett-bound} imply
\[
  \log D_n\geq n\log n-O(n).
\]

In the opposite direction, in 1990 J.~C. Lagarias, H.~W. Lenstra, Jr., and C. Schnorr
\cite[Theorem~2.3]{lagariaslenstraschnorr1990} proved that
\begin{equation}
  D_n
  \leq
  \gamma_n^n\prod_{i=1}^n\frac{i+3}{4}
  =
  \gamma_n^n\frac{(n+3)!}{6\cdot4^n}.
  \label{eq:intro-lls-bound}
\end{equation}
In recent years, J. Wen
and X. Chang \cite[Theorem~4]{wenchang2019} obtained
\begin{equation}
  D_n
  \leq
  \left(\frac n8+\frac65\right)^n
  \frac{(n+3)!}{6\cdot4^n},
  \label{eq:intro-wen-chang-bound}
\end{equation}
and C. Porter, E. Dable-Heath, and C. Ling \cite[Theorem~3]{porter2024} obtained
\begin{equation}
  D_n
  \leq
  \frac{25}{12}\gamma_{n-3}^{n-3}
  \prod_{i=4}^n\left(\frac i4+\frac{29}{24}\right)
  \qquad(n\geq4).
  \label{eq:intro-porter-bound}
\end{equation}
In fact, all \eqref{eq:intro-lls-bound}--\eqref{eq:intro-porter-bound} imply
\[
  \log D_n\leq2n\log n+O(n).
\]

This paper mainly proves the following theorem.

\begin{theorem}
\label{thm:asymptotic-growth}
One has
\begin{equation}
  \lim_{n\to\infty}\frac{\log D_n}{n\log n}=2.
  \label{eq:asymptotic-main}
\end{equation}
\end{theorem}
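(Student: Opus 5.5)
The upper bound $\log D_n\le 2n\log n+O(n)$ is already given by \eqref{eq:intro-lls-bound} (or \eqref{eq:intro-wen-chang-bound}, \eqref{eq:intro-porter-bound}), together with $\gamma_n=O(n)$. So the whole task is the lower bound $\log D_n\ge (2-o(1))\,n\log n$, which means constructing HKZ-reduced bases with large defect.

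Write $r_i=\lVert\vect b_i^*\rVert^2$ for the Gram--Schmidt profile and $\vect b_i=\vect b_i^*+\sum_{j<i}\mu_{ij}\vect b_j^*$. Then
\[
\Delta(B)=\prod_{i=1}^n\frac{\lVert\vect b_i\rVert^2}{r_i},
\]
and with $|\mu_{ij}|=\tfrac12$ we have $\lVert\vect b_i\rVert^2\ge \tfrac14\sum_{j<i}r_j$. A flat profile only gives about $n!$, which is Bennett's bound. To reach $n^{2n}$ the profile must decay, and the HKZ constraint
\[
r_i\le \gamma_k\Bigl(\prod_{j\ge i}r_j\Bigr)^{1/k},\qquad k=n-i+1,
\]
controls how fast it can decay. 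I would take $r_i=f(k)$ with $\log f(k)=\beta(\log k)^2$ for a fixed $\beta<\tfrac12$. Then $\log f(k)$ minus the mean of $\log f(j)$ over $j\le k$ is about $2\beta\log k$, which fits under $\log\gamma_k\approx\log k-O(1)$ for large $k$. The small $k$ (a block of size $o(n)$ at the end) are treated separately, at a cost of only $o(n\log n)$.

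The heuristic computation goes as follows. Put $k=n(1-x)$. Then $\sum_{j<i}r_j\gtrsim r_1 n/(2\beta\log n)$ and $r_1/r_i\approx(1-x)^{-2\beta\log n}$, so
\[
\log\frac{\lVert\vect b_i\rVert^2}{r_i}\ge \log n-\log\log n-2\beta\log n\,\log(1-x)-O(1).
\]
Summing over $i$ and using $\int_0^1-\log(1-x)\,dx=1$ gives
\[
\log\Delta(B)\ge (1+2\beta-o(1))\,n\log n .
\]
Letting $\beta\uparrow\tfrac12$ yields $\liminf \log D_n/(n\log n)\ge 2$.

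The main obstacle is realizing this profile by an actual HKZ-reduced basis. The profile inequality is necessary but not sufficient: we need $\vect b_i^*$ to be a shortest vector of the projected lattice $\pi_i(\Lambda)$ for every $i$ simultaneously, while keeping $|\mu_{ij}|$ close to $\tfrac12$. My first attempt would be to build the lattice from the bottom up. At step $k$ we have an HKZ-reduced $(k-1)$-dimensional lattice $L_{k-1}$ (the last $k-1$ projected vectors). We adjoin a new vector $\vect b^*$ orthogonal to it with $\lVert\vect b^*\rVert^2=f(k)$, plus a shift $\sum\mu_j\vect b_j^*$ with $\mu_j\in[-\tfrac12,\tfrac12]$ chosen at random. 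A Minkowski--Hlawka / Siegel mean-value averaging over the shift shows that, because $f(k)$ is a factor $e^{-O(1)}$ below the Gaussian-heuristic minimum $\approx (k/2\pi e)(\det)^{2/k}$, with positive probability no nonzero vector of the new $k$-dimensional lattice is shorter than $\vect b^*$. Since we need $\beta<\tfrac12$, this slack of a constant factor is exactly what makes the averaging work. I would then check that the random $\mu_j$ (or $\mu_j=\pm\tfrac12$ chosen compatibly) still give $\sum_j\mu_j^2r_j\ge c\sum_j r_j$, which only changes constants in the computation above. The delicate point is that the averaging must be carried out over shifts while the lower-dimensional lattice is held fixed, so it is a conditional Siegel-type estimate. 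That is where I expect most of the work to go.
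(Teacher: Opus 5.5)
Your upper bound matches the paper's (Lagarias--Lenstra--Schnorr with $\gamma_n\le n$). Your heuristic is also sound: with $|\mu_{i,j}|$ of order $\tfrac12$ and $\log r_i=\beta(\log k)^2$ one does get $\log\Delta(B)\ge(1+2\beta-o(1))\,n\log n$, and the first $\approx n/(2\beta\log n)$ large Gram--Schmidt vectors do the inflating. \textbf{The gap} is that the whole lower bound rests on an HKZ-reduced basis having \emph{both} this profile \emph{and} $\sum_{j<i}\mu_{i,j}^2r_j\ge c\sum_{j<i}r_j$ for almost all $i$, and you do not establish this. The profile alone is the Hanrot--Stehl\'e result (Lemma~\ref{lem:hanrot-stehle-block}, essentially $\beta=\tfrac12$). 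That result says nothing about the coefficients, so you would have to reprove a strengthened version, and your sketch does not do so. As written, $L_{k-1}+\Z(\vect{b}^*+\sum_j\mu_j\vect{b}_j^*)$ contains $L_{k-1}$, whose minimum $\sqrt{f(k-1)}$ is below $\sqrt{f(k)}$, so the shortest-vector condition fails outright. The step that works keeps $\vect{b}_i^*$ itself in $\Lambda_i$ and lifts the old generators, $\pi_{i+1}(\vect{b}_j)\mapsto\pi_{i+1}(\vect{b}_j)+\mu_{j,i}\vect{b}_i^*$ for $j>i$. What is drawn at one step is then a \emph{column} of the $\mu$-matrix. But $\lVert\vect{b}_i\rVert^2=r_i+\sum_{j<i}\mu_{i,j}^2r_j$ is a \emph{row}, assembled over $i-1$ later steps, each conditioned on that step's success. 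So you need a concentration statement over the whole process, not the one-step check you describe.

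The probabilistic step is also not a Siegel-type estimate, since Siegel and Minkowski--Hlawka average over lattices. With $L_{k-1}$ fixed, averaging over the lift bounds the failure probability by $\sum_{\vect{0}\ne\vect{v}\in L_{k-1},\,\lVert\vect{v}\rVert^2<f(k)}\sqrt{1-\lVert\vect{v}\rVert^2/f(k)}$, which depends on that particular lattice. Because $f(k)/f(k-1)=1+O(\log k/k)$, the ball barely exceeds $\lambda_1(L_{k-1})$, so the slack below the Gaussian heuristic does not help here. You would need an inductive invariant showing the lattice you have built has only $O(\sqrt{k/\log k})$ vectors of squared norm below $f(k)$; this is exactly the work you defer.

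The paper avoids all of this with an idea your proposal lacks: the norm inflation need not come from the coefficients of the steep block. It uses $C_s$ purely as a black box ($\lambda_1=1$, $\prod_jd_j=e^{6(s-1)}s^{-s}$). It places the all-half block $A_r$, with unit Gram--Schmidt vectors and all projected minima $1$, orthogonally in front, and adds the same $\vect{u}=\tfrac12\sum_{i\le r}\vect{e}_i$ to every $\vect{c}_j$. HKZ reduction survives because any vector with a nonzero $C_s$-coefficient has a nonzero $\Lambda(C_s)$-component in $W$. Also $\lVert\vect{c}_j+\vect{u}\rVert^2\ge1+r/4$ whatever the coefficients of $C_s$ are, so $r\approx n/\log n$ gives $\exp(2n\log n-n\log\log n-O(n))$. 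Your single-block route may well be completable, but only by reopening the Hanrot--Stehl\'e construction; the paper's decoupling makes that unnecessary.
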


Porter, Dable-Heath, and Ling \cite[Theorem~2 and Section~4]{porter2024}
determined
\[
  D_1=1,\qquad D_2=\frac43,\qquad D_3=\frac{25}{12},
\]
and conjectured that \(D_4=\gamma_4^4=4\).  This paper disproves the
conjecture by proving the following theorem.

\begin{theorem}
\label{thm:rank-four-exact}
\begin{equation}
  D_4=\frac{4375}{1024}.
  \label{eq:rank-four-exact-value}
\end{equation}
Moreover, up to similarity, there is a unique HKZ-reduced basis 
attaining \(D_4\).
\end{theorem}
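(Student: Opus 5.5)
The plan has two halves: a lower bound from an explicit HKZ-reduced basis with \(\Delta(B)=4375/1024\), and a matching upper bound with equality analysis. Throughout I use Gram--Schmidt coordinates. Write \(r_i=\lVert\vect{b}_i^*\rVert^2\) and \(\vect{b}_i=\vect{b}_i^*+\sum_{j<i}\mu_{ij}\vect{b}_j^*\). Then
\[
  \Delta(B)=\prod_{i=2}^4\Bigl(1+\sum_{j<i}\mu_{ij}^2\,\frac{r_j}{r_i}\Bigr).
\]
HKZ reduction (Section~2) amounts to two conditions: size reduction \(|\mu_{ij}|\le\frac12\), and, for each \(i\), \(\vect{b}_i^*\) is a shortest vector of the projected lattice \(\pi_i(\Lambda)\). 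After scaling I set \(r_1=1\). Since \(\pi_2(\vect{b}_2),\pi_2(\vect{b}_3),\pi_2(\vect{b}_4)\) is a \(3\)-dimensional HKZ-reduced basis, the known values \(D_2=4/3\) and \(D_3=25/12\), and their extremal configurations, can be used on the tail.

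For the lower bound, I note that \(4375/1024=(5/4)^4\cdot\frac74\). This suggests an extremal Gram matrix with \(|\mu_{ij}|=\frac12\) for many pairs and rational \(r_i\) making the three factors multiply to \((25/16)(25/16)(7/4)\) or a similar split. I will locate it by numerically maximising \(\Delta\) over the \(9\)-parameter family \((r_2,r_3,r_4,\mu_{ij})\), imposing only the shortest-vector inequalities \(\lVert\pi_i(\vect{v})\rVert^2\ge r_i\) for \(\vect{v}\) in a finite set of small integer combinations. I will then rationalise the optimum and verify HKZ reducedness exactly. This verification reduces to checking \(\lambda_1\) of lattices of dimension at most \(4\), done by a finite enumeration via Fincke--Pohst bounds.

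For the upper bound, I first reduce to finitely many constraints. For each \(i\), only integer vectors with bounded coefficients can compete with \(\vect{b}_i^*\) in \(\pi_i(\Lambda)\); the bounds come from size reduction and \(r_{i+1}\ge\frac34 r_i\)-type inequalities valid for HKZ bases. I then maximise \(\Delta\) subject to these finitely many polynomial inequalities in the following order. The last factor \(1+\sum_j\mu_{4j}^2r_j/r_4\) is increasing in each \(\mu_{4j}^2\). So I push \(|\mu_{4j}|\) to \(\frac12\), except where a short-vector constraint, typically \(\lVert \pi_k(\vect{b}_4\pm\vect{b}_k)\rVert\ge\lVert\vect{b}_k^*\rVert\) or a three-term combination, becomes active. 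I proceed the same way for \(\mu_{3j}\). This leaves a case split according to which constraints are active. In each case the problem becomes a low-dimensional optimisation in the \(r_i\), which I can bound by hand or by elementary inequalities (AM--GM, monotonicity in one variable). The case with all \(|\mu_{ij}|=\frac12\) and \(\lambda_1\)-constraints coming from \(D_4\)-lattice-like vectors should give exactly \(4375/1024\). Uniqueness up to similarity then follows by tracking where equality is forced in every step.

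The main obstacle is this case analysis. The constraint set is nonconvex, and the sign patterns of the \(\mu_{ij}\) interact with which combinations such as \(\vect{b}_4-\vect{b}_3+\vect{b}_1\) are short. Porter--Dable-Heath--Ling's conjectured optimum \(\gamma_4^4=4\) shows that the natural guess is not extremal, so a careful exhaustive treatment is needed. I would first try to cut the cases down using symmetry (sign changes \(\vect{b}_i\mapsto-\vect{b}_i\) normalise the signs of \(\mu_{21},\mu_{32},\mu_{43}\)). I would then use the \(3\)-dimensional extremal analysis behind \(D_3=25/12\) to bound the contribution of the tail, and reduce the remaining work to bounding the first factor jointly with it.
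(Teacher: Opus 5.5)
Your reduction to finitely many short-vector constraints is fine: for an upper bound, any finite subfamily of the shortest-vector conditions is a valid relaxation. The genuine gap is the upper bound itself, which is the real content of the theorem. You defer it to a case analysis that you never carry out, and the step meant to organise that analysis does not actually reduce the problem.

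\emph{The greedy step.} ``Push \(|\mu_{4j}|\) to \(\frac12\) unless a constraint becomes active'' is only a first-order necessary condition at a maximiser. You have not shown that a maximiser exists: with \(r_1=1\), the \(r_i\) are unbounded above. The constraints couple the \(\mu\)'s with each other and with the \(r_i\); for example, \(Q_1(0,1,1,0)\ge1\) involves \(\mu_{21},\mu_{31},\mu_{32},r_2,r_3\). The feasible set is nonconvex. The possible active sets among the size-reduction bounds, the KZ inequalities and the dozens of short-vector constraints are far too many to dispatch ``by AM--GM or monotonicity'' with any guarantee of completeness.

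\emph{Splitting off the tail.} At the optimum the projected basis \((\pi_2(\vect{b}_2),\pi_2(\vect{b}_3),\pi_2(\vect{b}_4))\) is indeed \(D_3\)-extremal. But \(\Delta(B)\) equals its defect times \((1+\mu_{21}^2/r_2)\prod_{i=3,4}(1+\mu_{i1}^2/\lVert\pi_2(\vect{b}_i)\rVert^2)\). Since \(\lVert\pi_2(\vect{b}_i)\rVert^2\ge r_2\ge\frac34\), the a priori bound is only \(\frac{25}{12}\cdot(\frac43)^3\approx4.94>T\approx4.27\). The coupling between the two parts is the entire difficulty.

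The paper supplies three ingredients your plan has no counterpart for.
\begin{enumerate}
\item \emph{Compactification.} Any competitor with \(\Delta\ge T\) has \(r_2,r_3,r_4\le2\). This follows from the KZ lower bounds and, in one case, from \(D_3\).
\item \emph{Exhaustive certificate.} An exact-rational branch-and-bound runs over the eight sign-normalised boxes. Each subbox is discarded either by an interval upper bound on \(\delta\) below \(T\), or by an interval proof that one of Novikova's \(21\) constraints or a KZ inequality fails. This continues until every survivor, among about \(3.4\times10^5\) terminal boxes, lies within \(1/1024\) of one of four points \(\vect{p}_{\eta,\theta}\). These points have \((r_2,r_3,r_4)=(\frac45,\frac45,\frac35)\) and all \(|\mu_{ij}|=\frac12\).
\item \emph{Local certificate.} At these points three constraints are active: \(Q_3(0,1)\ge r_3\), \(Q_2(1,1,1)\ge r_2\), and either \(Q_1(1,1,1,0)\ge1\) or \(Q_1(0,1,1,0)\ge1\). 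The gradient \(\nabla\log\delta\) is a combination of their linearisations and of the slacks \(\frac12-|\mu_{ij}|\), with strictly negative coefficients. Explicit Hessian and remainder bounds make the decrease quantitative.
\end{enumerate}

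Two smaller points. First, your lower bound is a search procedure, not an example, and the guessed split is not the one that occurs. The extremal basis has factors \(\frac{21}{16}\cdot\frac{25}{16}\cdot\frac{25}{12}\), not \((\frac{25}{16})^2\cdot\frac74\). Second, ``tracking where equality is forced'' will yield four distinct sign-normalised maximisers \(\vect{p}_{\eta,\theta}\), differing in the signs of \(\mu_{31}\) and \(\mu_{41}\). So uniqueness has to be established at the level of the lattice, by showing these bases are related by unimodular changes.
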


\section{Lattices, HKZ Reduction, and Orthogonality Defect}
\label{sec:preliminaries}

Throughout the paper, all lattices are full-rank Euclidean lattices.  We
write \(\R\) and \(\Z\) for the real numbers and the integers,
respectively; \(\lVert\cdot\rVert\) and \(\langle\cdot,\cdot\rangle\)
denote the Euclidean norm and inner product.

Let \(B=(\vect{b}_1,\ldots,\vect{b}_n)\) be a lattice basis as in
Section~\ref{sec:introduction}.  For \(1\leq i\leq n\), let \(\pi_i\) be
the orthogonal projection onto
\(\operatorname{span}_{\R}(\vect{b}_1,\ldots,\vect{b}_{i-1})^\perp\), with
\(\pi_1\) the identity.  Writing \(\langle\cdot\rangle_{\Z}\) for integer
span, define the \(i\)th projected lattice by
\[
  \Lambda_i(B)
  =
  \pi_i(\Lambda(B))
  =
  \bigl\langle
    \pi_i(\vect{b}_i),\ldots,\pi_i(\vect{b}_n)
  \bigr\rangle_{\Z}.
\]
The projected vectors
\(\pi_i(\vect{b}_i),\ldots,\pi_i(\vect{b}_n)\) are linearly independent;
hence \(\Lambda_i(B)\) is a lattice.  Equivalently, it is the projection of
\(\Lambda(B)\) at level \(i\).

Write
\(
  \vect{b}_i^*=\pi_i(\vect{b}_i).
\)
Then the Gram--Schmidt decomposition of \(B\) is
\[
  \vect{b}_i
  =\vect{b}_i^*+\sum_{j<i}\mu_{i,j}\vect{b}_j^*,
  \qquad
  \mu_{i,j}
  =\frac{\langle\vect{b}_i,\vect{b}_j^*\rangle}
        {\langle\vect{b}_j^*,\vect{b}_j^*\rangle},
  \qquad
  d_i=\lVert\vect{b}_i^*\rVert^2.
\]

For a lattice \(\Lambda\), recall that its first minimum is
\[
  \lambda_1(\Lambda)
  =
  \min\{\lVert \vect{x}\rVert:
  \vect{0}\neq\vect{x}\in\Lambda\}.
\]

The squared norm of a lattice vector
\(\sum_{i=1}^n z_i\vect{b}_i\ (z_i\in\Z)\) has the Lagrange
expansion
\begin{equation}
  Q(z_1,\ldots,z_n)
  =
  \left\lVert\sum_{i=1}^nz_i\vect{b}_i\right\rVert^2
  =
  \sum_{j=1}^nd_j
  \left(z_j+\sum_{k>j}\mu_{k,j}z_k\right)^2.
  \label{eq:prelim-lagrange-expansion}
\end{equation}
We write \(Q_i\) for the norm form of \(\Lambda_i(B)\) in the basis
\((\pi_i(\vect{b}_i),\ldots,\pi_i(\vect{b}_n))\):
\[
  Q_i(z_i,\ldots,z_n)
  =
  \sum_{j=i}^nd_j
  \left(z_j+\sum_{k>j}\mu_{k,j}z_k\right)^2.
\]
We now give the precise Gram--Schmidt formulation of HKZ reduction.

\begin{definition}
\label{def:hkz}
An ordered basis \(B=(\vect{b}_1,\ldots,\vect{b}_n)\) is
\emph{Hermite--Korkine--Zolotarev reduced}, or \emph{HKZ-reduced}, if
\begin{enumerate}
  \item it is size-reduced:
  \[
    |\mu_{i,j}|\leq\frac12
    \qquad(1\leq j<i\leq n);
  \]
  \item at every level \(1\leq i\leq n\), 
  \(\vect{b}_i^*\) is a shortest nonzero vector of \(\Lambda_i(B)\), i.e.
  \[
    \lVert \vect{b}_i^*\rVert=\lambda_1(\Lambda_i(B)).
  \]
\end{enumerate}
\end{definition}

In terms of \eqref{eq:prelim-lagrange-expansion}, the second condition
is equivalent to
\[
  \min_{\vect{z}\in\Z^{n-i+1}\setminus\{\vect{0}\}}
  Q_i(\vect{z})=d_i
  \qquad(1\leq i\leq n).
\]
Equivalently, Definition~\ref{def:hkz} says that \(B\) is HKZ-reduced if
and only if it is size-reduced, \(\vect{b}_1\) is a shortest nonzero vector
of \(\Lambda(B)\), and the basis
\((\pi_2(\vect{b}_2),\ldots,\pi_2(\vect{b}_n))\) of \(\Lambda_2(B)\) is
HKZ-reduced.

\begin{proposition}
\label{prop:hkz-existence}
Every Euclidean lattice admits an HKZ-reduced basis.
\end{proposition}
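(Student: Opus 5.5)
We argue by induction on the dimension \(n\), following the recursive characterization stated after Definition~\ref{def:hkz}. The basis is HKZ-reduced if and only if three things hold: it is size-reduced, \(\vect{b}_1\) is a shortest vector, and the projected basis of \(\Lambda_2(B)\) is HKZ-reduced. For \(n=1\) there is nothing to prove: any generator \(\vect{b}_1\) of \(\Lambda\) is a shortest nonzero vector, and size reduction is vacuous.

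For the inductive step, let \(\Lambda\) be a full-rank lattice of dimension \(n\geq2\). We proceed in three steps.

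\emph{Choosing \(\vect{b}_1\).} Since \(\Lambda\) is discrete, \(\lambda_1(\Lambda)\) is attained. Choose \(\vect{b}_1\in\Lambda\) with \(\lVert\vect{b}_1\rVert=\lambda_1(\Lambda)\). Such a vector is primitive, because if \(\vect{b}_1=k\vect{v}\) with \(\vect{v}\in\Lambda\) and \(|k|\geq2\), then \(\vect{v}\) would be shorter. The standard fact that a primitive vector extends to a basis then gives a basis \((\vect{b}_1,\vect{c}_2,\ldots,\vect{c}_n)\) of \(\Lambda\). To see this, note that \(\Lambda\cap\R\vect{b}_1=\Z\vect{b}_1\), so \(\Lambda/\Z\vect{b}_1\) is torsion-free and finitely generated, hence free of rank \(n-1\); lifting a basis of it gives the extension.

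\emph{Using the projected lattice.} Let \(\pi_2\) be the projection onto \(\vect{b}_1^\perp\). Then \(\pi_2(\Lambda)\) is generated by the linearly independent vectors \(\pi_2(\vect{c}_i)\), so it is an \((n-1)\)-dimensional lattice in \(\vect{b}_1^\perp\). By the induction hypothesis it has an HKZ-reduced basis \((\vect{u}_2,\ldots,\vect{u}_n)\). Each \(\vect{u}_i\) lifts to some \(\vect{b}_i'\in\Lambda\) with \(\pi_2(\vect{b}_i')=\vect{u}_i\). The vectors \(\vect{b}_1,\vect{b}_2',\ldots,\vect{b}_n'\) form a basis of \(\Lambda\): any \(\vect{x}\in\Lambda\) satisfies \(\pi_2(\vect{x})=\sum z_i\vect{u}_i\), so \(\vect{x}-\sum z_i\vect{b}_i'\in\Lambda\cap\R\vect{b}_1=\Z\vect{b}_1\).

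\emph{Size reduction.} Replacing \(\vect{b}_i'\) by \(\vect{b}_i'+\sum_{j<i}k_j\vect{b}_j'\) with integers \(k_j\) leaves every projected lattice \(\Lambda_i\) and every \(\vect{b}_i^*\) unchanged, so the HKZ conditions at levels \(\geq2\) are preserved. Process \(j=i-1,i-2,\ldots,1\) in turn, subtracting \(\lfloor\mu_{i,j}\rceil\vect{b}_j\). This makes \(|\mu_{i,j}|\leq1/2\), and it does not disturb the coefficients \(\mu_{i,j'}\) with \(j'>j\) that were already fixed, because \(\vect{b}_j\) has no \(\vect{b}_{j'}^*\) component for \(j'>j\).

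It remains to check that the resulting basis satisfies Definition~\ref{def:hkz}. At level \(1\), \(\vect{b}_1^*=\vect{b}_1\) is shortest by construction. At levels \(i\geq2\), we have \(\Lambda_i(B)=\pi_i(\Lambda)\), which equals the \((i-1)\)th projected lattice of \(\pi_2(\Lambda)\) with respect to \((\vect{u}_2,\ldots,\vect{u}_n)\). The vector \(\vect{b}_i^*\) is the corresponding Gram--Schmidt vector, so it is shortest by the inductive HKZ property.

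The only nontrivial ingredients are the extension of a primitive vector to a basis and the existence of a shortest vector, which follows from discreteness. I expect the extension step, via the freeness of \(\Lambda/\Z\vect{b}_1\), to be the main point needing care.
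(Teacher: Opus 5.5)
Your proof is correct and follows the same route as the paper: pick a shortest \(\vect{b}_1\), take an HKZ-reduced basis of \(\pi_2(\Lambda)\) by induction, lift it to \(\Lambda\), and size-reduce. The only differences are that you spell out the points the paper calls ``easy to check'' (primitivity of \(\vect{b}_1\), that \(\pi_2(\Lambda)\) is a lattice, and that the lifts form a basis), and that you run the full size-reduction loop, whereas the paper only shifts each lift by an integer multiple of \(\vect{b}_1\) because the coefficients \(\mu_{i,j}\) with \(j\geq2\) are inherited from the already size-reduced inductive basis.
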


\begin{proof}
We argue by induction on the dimension \(n\).  The one-dimensional case is
trivial.  Given an \(n\)-dimensional lattice \(\Lambda\), we construct an
HKZ-reduced basis \(B\) as follows.  First choose \(\vect{b}_1\in\Lambda\) with
\(\lVert\vect{b}_1\rVert=\lambda_1(\Lambda)\).  By induction, let
\((\vect{c}_2,\ldots,\vect{c}_n)\) be an HKZ-reduced basis of
\(\pi_2(\Lambda)\).  For each \(i=2,\ldots,n\), choose \(\alpha_i\in\R\) such
that \(\vect{c}_i+\alpha_i\vect{b}_1\in\Lambda\), and choose \(m_i\in\Z\)
such that \(\lvert\alpha_i-m_i\rvert\leq1/2\).  Put
\[
  \vect{b}_i
  =\vect{c}_i+(\alpha_i-m_i)\vect{b}_1
  =(\vect{c}_i+\alpha_i\vect{b}_1)-m_i\vect{b}_1\in\Lambda.
\]
Then it is easy to check that
\((\pi_2(\vect{b}_2),\ldots,\pi_2(\vect{b}_n))\) is the HKZ-reduced basis
\((\vect{c}_2,\ldots,\vect{c}_n)\) of \(\pi_2(\Lambda)\), and that
\(B=(\vect{b}_1,\ldots,\vect{b}_n)\) is size-reduced.  Thus \(B\) is an
HKZ-reduced basis of \(\Lambda\).
\end{proof}

Let \(G=(\langle \vect{b}_i,\vect{b}_j\rangle)_{i,j}\) be the Gram matrix
of \(B\).  Then
\[
  \det(\Lambda(B))^2
  =
  \det G
  =
  \prod_{i=1}^nd_i.
\]
Consequently,
\[
  \Delta(B)
  =
  \frac{\prod_{i=1}^nG_{ii}}{\det G}
  =
  \prod_{i=1}^n
  \frac{\lVert \vect{b}_i\rVert^2}{\lVert \vect{b}_i^*\rVert^2}.
\]

\section{Asymptotic Growth of the HKZ Orthogonality Defect}
\label{sec:asymptotic-growth}

We will prove Theorem~\ref{thm:asymptotic-growth} by combining two
HKZ-reduced blocks.

\begin{lemma}
\label{lem:all-half-block}
Let \(U\) be an \(r\)-dimensional Euclidean space with orthonormal basis
\(\vect{e}_1,\ldots,\vect{e}_r\), and define
\begin{equation}
  \vect{a}_i=\vect{e}_i-\frac12\sum_{k<i}\vect{e}_k,
  \qquad
  A_r=(\vect{a}_1,\ldots,\vect{a}_r).
  \label{eq:all-half-block}
\end{equation}
Then \(A_r\) is HKZ-reduced, with Gram--Schmidt vectors
\(\vect{a}_i^*=\vect{e}_i\), and
\begin{equation}
  \Delta(A_r)
  =
  \prod_{i=1}^r\left(1+\frac{i-1}{4}\right)
  =
  \frac{(r+3)!}{6\cdot4^r}.
  \label{eq:all-half-defect}
\end{equation}
\end{lemma}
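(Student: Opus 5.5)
Since \(\vect{a}_i-\vect{e}_i\in\operatorname{span}(\vect{e}_1,\ldots,\vect{e}_{i-1})\) for every \(i\), we get \(\operatorname{span}(\vect{a}_1,\ldots,\vect{a}_{i-1})=\operatorname{span}(\vect{e}_1,\ldots,\vect{e}_{i-1})\), and so \(\pi_i\) is the orthogonal projection onto \(\operatorname{span}(\vect{e}_i,\ldots,\vect{e}_r)\). Therefore \(\vect{a}_i^*=\pi_i(\vect{a}_i)=\vect{e}_i\), and reading off the coefficients gives \(d_i=1\) and \(\mu_{i,j}=\langle\vect{a}_i,\vect{e}_j\rangle=-\tfrac12\) for all \(j<i\). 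This establishes size reduction at once, since \(|\mu_{i,j}|=\tfrac12\).

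For the minimality condition at level \(i\), I would note that \(\Lambda_i(A_r)\) is generated by \(\pi_i(\vect{a}_k)\) for \(k\ge i\). Each of these is \(\vect{e}_k-\tfrac12\sum_{i\le j<k}\vect{e}_j\), which is an integer combination of the \(\vect{e}_j\) with \(j\ge i\) whose coefficient on \(\vect{e}_k\) is \(1\). The resulting change-of-basis matrix to \((\vect{e}_i,\ldots,\vect{e}_r)\) is therefore triangular with unit diagonal, though it has half-integer entries, so \(\Lambda_i(A_r)\) need not equal \(\Z^{r-i+1}\). I would argue directly with the Lagrange expansion instead. With \(d_j=1\),
\[
Q_i(\vect{z})=\sum_{j\ge i}\Bigl(z_j-\tfrac12\sum_{k>j}z_k\Bigr)^2 .
\]
Take a nonzero \(\vect{z}\) and let \(m\) be the largest index with \(z_m\neq0\). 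The \(j=m\) term equals \(z_m^2\ge1\), so \(Q_i(\vect{z})\ge1=d_i\). Equality is attained at \(\vect{z}=(1,0,\ldots,0)\), so \(\lVert\vect{a}_i^*\rVert=\lambda_1(\Lambda_i(A_r))\). This is the one step that needs a real argument, and the last-nonzero-coordinate trick handles it cleanly.

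Finally, by the formula \(\Delta(B)=\prod_i\lVert\vect{b}_i\rVert^2/\lVert\vect{b}_i^*\rVert^2\) from Section~\ref{sec:preliminaries}, we have \(\lVert\vect{a}_i\rVert^2=1+(i-1)/4\) and \(\lVert\vect{a}_i^*\rVert^2=1\). Hence
\[
\Delta(A_r)=\prod_{i=1}^r\frac{i+3}{4}=\frac{4\cdot5\cdots(r+3)}{4^r}=\frac{(r+3)!}{3!\,4^r}=\frac{(r+3)!}{6\cdot4^r},
\]
which is \eqref{eq:all-half-defect}.
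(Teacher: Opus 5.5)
Your proof is correct and follows the paper's argument. The $j=m$ term of your Lagrange expansion is exactly the squared $\vect{e}_m$-coordinate that the paper bounds below by $z_m^2\geq1$, and your size-reduction and defect computations are identical to the paper's. One cosmetic slip: in your aside, $\pi_i(\vect{a}_k)$ is a half-integer (not integer) combination of the $\vect{e}_j$, as you note a line later; this plays no role in the argument.
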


\begin{proof}
Equation~\eqref{eq:all-half-block} is a Gram--Schmidt expansion in
the orthonormal vectors \(\vect{e}_i\).  Hence
\(\vect{a}_i^*=\vect{e}_i\) and
\(\mu_{i,k}=-1/2\) for \(k<i\); thus \(A_r\) is size-reduced.

Fix \(p\in\{1,\ldots,r\}\).  After projection away from
\(\vect{e}_1,\ldots,\vect{e}_{p-1}\), the remaining basis vectors are
\[
  \vect{a}_j^{(p)}
  =
  \vect{e}_j-\frac12\sum_{p\leq k<j}\vect{e}_k,
  \qquad p\leq j\leq r.
\]
For a nonzero vector
\(\vect{x}=\sum_{j=p}^r z_j\vect{a}_j^{(p)}\), let \(q\) be the
largest index with \(z_q\neq0\).  The \(\vect{e}_q\)-coordinate of
\(\vect{x}\) is \(z_q\), and therefore
\(\lVert \vect{x}\rVert^2\geq z_q^2\geq1\).  Since
\(\vect{a}_p^{(p)}=\vect{e}_p\), the first minimum of the projected lattice
is \(1\) at every
level.  Thus
\(A_r\) is HKZ-reduced.

Finally, \(\lVert \vect{a}_i\rVert^2=1+(i-1)/4\) and
\(\vol(\Lambda(A_r))=1\).  Multiplication gives
\eqref{eq:all-half-defect}.
\end{proof}

\begin{lemma}
\label{lem:hanrot-stehle-block}
For every \(s\geq1\), there exists an HKZ-reduced basis
\(C_s=(\vect{c}_1,\ldots,\vect{c}_s)\) such that
\(\lVert\vect{c}_1\rVert=1\) and its squared Gram--Schmidt lengths are
\[
  d_j
  =
  \lVert \vect{c}_j^*\rVert^2
  =
  \frac{s-j+1}{s}
  \prod_{t=s-j+1}^{s-1}
  \bigl(e^{-6}(t+1)\bigr)^{-1/t}.
\]
Since \(\vect{c}_1\) is a shortest nonzero vector of \(\Lambda(C_s)\),
\begin{equation}
  \lVert \vect{x}\rVert^2\geq1
  \qquad
  \text{for every }\vect{0}\neq\vect{x}\in\Lambda(C_s).
  \label{eq:hs-minimum}
\end{equation}
Moreover, direct calculation gives
\begin{equation}
  \prod_{j=1}^s d_j
  =
  e^{6(s-1)}s^{-s}.
  \label{eq:hs-gso-product}
\end{equation}
\end{lemma}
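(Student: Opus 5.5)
The plan is to build \(C_s\) from the bottom up, one level at a time, rather than writing down a single explicit basis. The first step is a routine check of \eqref{eq:hs-gso-product}, which only needs the formula for \(d_j\). The factor \((s-j+1)/s\) multiplies to \(s!/s^s\) over \(j=1,\ldots,s\). A fixed \(t\in\{1,\ldots,s-1\}\) occurs in the inner product exactly for the \(t\) indices \(j\geq s-t+1\). It therefore contributes \(\bigl(e^{-6}(t+1)\bigr)^{-1}\), and the product over \(t\) is \(e^{6(s-1)}/s!\). Multiplying gives \(e^{6(s-1)}s^{-s}\). Also \(d_1=1\), since the inner product is empty. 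Once HKZ-reducedness is established, \eqref{eq:hs-minimum} follows because \(\vect{c}_1=\vect{c}_1^*\) has norm \(1\) and is a shortest vector.

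For existence, set \(t=s-j+1\) for the dimension of the projected lattice at level \(j\), and write \(d_j=f(t)\). The formula then gives the ratio \(f(t+1)/f(t)=\frac{t+1}{t}\bigl(e^{-6}(t+1)\bigr)^{-1/t}\). I would prove by induction on \(t\) the following claim: there is an HKZ-reduced basis of a \(t\)-dimensional lattice \(L_t\) whose Gram--Schmidt norms are \(f(t),f(t-1),\ldots,f(1)\), in that order. The case \(t=1\) is trivial. In the inductive step I take the HKZ-reduced basis \((\vect{c}_2',\ldots,\vect{c}_{t+1}')\) of \(L_t\) and add a new orthogonal direction \(\vect{e}\). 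I put \(\vect{c}_1=\sqrt{f(t+1)}\,\vect{e}\) and \(\vect{c}_i=\vect{c}_i'+\mu_i\vect{c}_1\) with \(\mu_i\in[-\tfrac12,\tfrac12]\) still to be chosen. The resulting basis is automatically size-reduced at the new level, and its projection \(\pi_2\) is the old HKZ basis. As in the proof of Proposition~\ref{prop:hkz-existence}, it is HKZ-reduced exactly when \(\lambda_1(L_{t+1})^2=f(t+1)\), that is, when no lattice vector of norm less than \(f(t+1)\) lies outside \(\Z\vect{c}_1\). The last step rescales so that \(\lVert\vect{c}_1\rVert=1\); this is harmless because the stated \(d_j\) are already normalized with \(d_1=f(s)=1\).

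The main obstacle is choosing the lift parameters \(\mu_i\) so that the new lattice has no short vectors. I would use a Minkowski--Hlawka-type averaging argument with \((\mu_2,\ldots,\mu_{t+1})\) uniform in \([-\tfrac12,\tfrac12]^t\), equivalently a uniformly random lift modulo \(\Z\vect{c}_1\). For each \(\vect{0}\neq\vect{y}\in L_t\) with \(\lVert\vect{y}\rVert^2<f(t+1)\), the fibre \(\vect{y}+(\alpha(\vect{y})+\Z)\vect{c}_1\) contains a vector of norm below \(f(t+1)\) with probability at most about \(2\sqrt{(f(t+1)-\lVert\vect{y}\rVert^2)/f(t+1)}\). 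The number of such \(\vect{y}\) I would bound by a volume comparison, roughly \(\vol(\text{ball of radius }\sqrt{f(t+1)})/\det L_t\). This uses the known minimum \(\lambda_1(L_t)^2=f(t)\), through packing disjoint balls of radius \(\sqrt{f(t)}/2\), to control lattice points in the ball. Summing over \(\vect{y}\) and taking expectations, the expected number of bad vectors is less than \(1\) provided \(f(t+1)^{t/2}\) is small compared with \(\det L_t=\prod_{u\leq t}f(u)^{1/2}\), up to factors like \(e^{O(t)}\) from the ball volume and the packing count. The specific ratio \(\bigl(e^{-6}(t+1)\bigr)^{-1/t}\) should be exactly what makes this inequality hold, with \(e^{6}\) absorbing the constants. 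Verifying that this margin suffices for every \(t\), including small \(t\), where the estimates are crude, is where I expect the real work.

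If the generic averaging bound is too lossy, the fallback is to average over the lifts of short vectors of \(L_t\) more carefully, using \(\lambda_1(L_t)^2=f(t)\) to bound their number by \((1+2\sqrt{f(t+1)/f(t)})^t\). Since \(f(t+1)/f(t)\to1\), this count is \(e^{O(t)}\), which the \(e^{6}\) slack should absorb.
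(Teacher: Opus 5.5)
Your check of \eqref{eq:hs-gso-product} and of \(d_1=1\) is correct. So is the reduction of existence to a one-level step: prepend \(\vect c_1=\sqrt{f(t+1)}\,\vect e\), lift the old basis with \(|\mu_i|\le\frac12\), and require \(\lambda_1(L_{t+1})^2=f(t+1)\). But that step is the whole content of the lemma, and your sketch does not carry it out. The paper does not prove it either; it cites Hanrot--Stehl\'e, where this quantitative work is done.

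There is also a sign slip. The formula gives \(f(t+1)/f(t)=\frac{t+1}{t}\bigl(e^{-6}(t+1)\bigr)^{1/t}\), with exponent \(+1/t\). With your sign one would need \(f(2)=e^{6}f(1)\), which no two-dimensional HKZ basis allows. With the correct sign the ratio is below \(1\) for \(t\le147\), and those steps are trivial. Every vector outside \(\Z\vect c_1\) then has squared norm at least \(\lambda_1(L_t)^2=f(t)\ge f(t+1)\), whatever the lift. So the difficulty lies at large \(t\), not at small \(t\).

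For \(t\ge148\) one has \(f(t)<f(t+1)\), so at least \(\pm\vect c_2'\) lie in the ball. Each nonzero \(\vect y\in L_t\) in the ball is bad with probability up to \(2\sqrt{1-f(t)/f(t+1)}\approx2\sqrt{(\log t-5)/t}\), which is only polynomially small. For the union bound you therefore essentially need \(L_t\) to have only \(O(\sqrt{t/\log t})\) vectors in the thin shell \(f(t)\le\lVert\vect y\rVert^2<f(t+1)\). Neither of your counting tools delivers this.

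\begin{itemize}
\item The quantity \(\vol(B)/\det L_t\) is not an upper bound for the lattice points of a fixed lattice. Here it is of order \((2\pi e^{-5})^{t/2}\), exponentially small, although \(B\) contains \(\vect 0,\pm\vect c_2'\).
\item The packing bound from \(\lambda_1(L_t)\) gives about \(\bigl(1+2\sqrt{f(t+1)/f(t)}\bigr)^t\approx3^t\). The factor \(e^{6}\) cannot absorb this, because within one step it enters only as \(e^{-6/t}\) in \(f(t+1)/f(t)\).
\end{itemize}

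The real slack sits in \(\det L_t\): at every level \(\lambda_1^2/\det^{2/k}=ke^{-6(k-1)/k}\), against the Minkowski--Hlawka scale \(k/(2\pi e)\). Packing ignores this. To exploit it you must carry through the induction some control of the short vectors of \(L_t\) beyond its minimum. One option is Gaussian-heuristic-type bounds on \(\#(L_t\cap B(R))\). A random lift preserves these on average, because averaging \(\sum_{\vect y}2\sqrt{\max\{0,R^2-\lVert\vect y\rVert^2\}}\) over lifts yields a \((t+1)\)-dimensional volume. You would then have to show that this bookkeeping closes at every level with the constant \(e^{-6}\). As it stands, the proposal reduces the lemma to an inequality that the proposed estimates cannot establish.
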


\begin{proof}
See Hanrot and Stehl\'e \cite[Theorems~1--2 and Corollary~1]{hanrotstehle2008}.
\end{proof}

\begin{proposition}
\label{prop:coupled-hkz-basis}
Let \(r,s\geq1\), let \(A_r\) be the basis from
Lemma~\ref{lem:all-half-block}, embedded in a Euclidean space \(U\), and let \(C_s\) be the basis from
Lemma~\ref{lem:hanrot-stehle-block}, embedded in a Euclidean
space \(W\).  Place \(U\) and \(W\) orthogonally and, with a slight abuse of
notation, set
\[
  \vect{u}=\frac12\sum_{i=1}^r \vect{e}_i 
  \qquad\text{and}\qquad
  B_{r,s}
  =
  (\vect{a}_1,\ldots,\vect{a}_r,
  \vect{c}_1+\vect{u},\ldots,\vect{c}_s+\vect{u}).
\]
Then \(B_{r,s}\) is an \((r+s)\)-dimensional HKZ-reduced basis and
\begin{equation}
  \Delta(B_{r,s})
  \geq
  \frac{(r+3)!}{6\cdot4^r}
  \left(1+\frac r4\right)^s
  s^s e^{-6(s-1)}.
  \label{eq:coupled-defect-bound}
\end{equation}
\end{proposition}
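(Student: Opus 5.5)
We will compute the Gram--Schmidt data of \(B_{r,s}\) directly and then check the two HKZ conditions and the defect bound.

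\emph{Step 1: Gram--Schmidt data.} The first \(r\) vectors are \(A_r\). By Lemma~\ref{lem:all-half-block} their Gram--Schmidt vectors are \(\vect{e}_1,\ldots,\vect{e}_r\), spanning \(U\). For \(j\leq s\), the vector \(\vect{c}_j+\vect{u}\) has projection \(\vect{c}_j\) onto \(U^\perp\supseteq W\), since \(\vect{u}\in U\). Hence \(\pi_{r+j}(\vect{c}_j+\vect{u})=\pi^{C}_j(\vect{c}_j)=\vect{c}_j^*\), where \(\pi^C_j\) is the projection inside \(W\) for \(C_s\). So the Gram--Schmidt squared lengths are \(1,\ldots,1,d_1,\ldots,d_s\), and
\[
\mu_{r+j,k}=\tfrac12 \qquad (k\leq r),
\]
because \(\langle\vect{u},\vect{e}_k\rangle=1/2\). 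The coefficients \(\mu_{r+j,r+l}\) are those of \(C_s\), which are at most \(1/2\) in absolute value since \(C_s\) is HKZ-reduced. Together with the size-reduction of \(A_r\), this shows \(B_{r,s}\) is size-reduced.

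\emph{Step 2: the minimum condition at each level.} For a level \(r+j\) with \(j\geq1\), the projected lattice is exactly \(\Lambda_j(C_s)\), so the condition is inherited from \(C_s\). For a level \(p\leq r\), the projected lattice \(\Lambda_p\) is generated by \(\vect{a}^{(p)}_p,\ldots,\vect{a}^{(p)}_r\) and \(\vect{c}_j+\vect{u}^{(p)}\), where \(\vect{u}^{(p)}=\tfrac12\sum_{k\geq p}\vect{e}_k\). We need \(\lambda_1(\Lambda_p)=1=\|\vect{e}_p\|\). Take a nonzero vector \(\vect{x}=\sum_i z_i\vect{a}^{(p)}_i+\sum_j y_j(\vect{c}_j+\vect{u}^{(p)})\).
\begin{itemize}
\item If some \(y_j\neq0\), the \(W\)-component of \(\vect{x}\) is the nonzero lattice vector \(\sum y_j\vect{c}_j\in\Lambda(C_s)\). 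By \eqref{eq:hs-minimum} it has squared norm at least \(1\), so \(\|\vect{x}\|^2\geq1\).
\item Otherwise \(\vect{x}\) lies in the projected \(A_r\)-lattice, and Lemma~\ref{lem:all-half-block} gives \(\|\vect{x}\|^2\geq1\).
\end{itemize}
This step carries the key structural point: the \(W\)-component alone already certifies the minimum, so the coupling by \(\vect{u}\) never creates a short vector. It is the main thing to get right, but it is straightforward given \(\lambda_1(\Lambda(C_s))=1\).

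\emph{Step 3: the defect.} The defect is
\[
\Delta(B_{r,s})=\frac{\prod_i\|\vect{b}_i\|^2}{\prod_i d_i}.
\]
The \(A_r\) part contributes \(\Delta(A_r)=(r+3)!/(6\cdot4^r)\), because its Gram--Schmidt lengths are all \(1\). For the remaining vectors, orthogonality of \(U\) and \(W\) gives \(\|\vect{c}_j+\vect{u}\|^2=\|\vect{c}_j\|^2+r/4\). Since \(\vect{c}_j\) is a nonzero vector of \(\Lambda(C_s)\), we have \(\|\vect{c}_j\|^2\geq1\), so \(\|\vect{c}_j+\vect{u}\|^2\geq 1+r/4\). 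Dividing by \(\prod d_j=e^{6(s-1)}s^{-s}\) from \eqref{eq:hs-gso-product} yields \eqref{eq:coupled-defect-bound}.
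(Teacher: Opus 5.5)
Your proposal is correct and follows the paper's proof essentially step for step. It uses the same Gram--Schmidt data with cross coefficients \(\mu_{r+j,k}=1/2\) and identifies the upper-level projected lattices with \(\Lambda_q(C_s)\). At levels \(p\leq r\) it makes the same case split on whether some second-block coefficient is nonzero, using the \(W\)-component and \eqref{eq:hs-minimum}, and it bounds the defect via \(\lVert\vect{c}_j+\vect{u}\rVert^2=\lVert\vect{c}_j\rVert^2+r/4\geq 1+r/4\) together with \eqref{eq:hs-gso-product}. You are slightly more explicit than the paper in justifying \(\pi_{r+j}(\vect{c}_j+\vect{u})=\vect{c}_j^*\), which is a welcome detail.
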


\begin{proof}
The Gram--Schmidt vectors of \(B_{r,s}\) are
\[
  \vect{e}_1,\ldots,\vect{e}_r,
  \vect{c}_1^*,\ldots,\vect{c}_s^*.
\]
The coefficients are
\begin{align*}
  &\mu_{i,j}=-\frac{1}{2} \qquad(1\leq j<i\leq r),\\
  &\mu_{r+i,j}=\frac{1}{2} \qquad(1\leq j\leq r,\ 1\leq i\leq s),
\end{align*}
and those \(\mu_{r+i,r+j}\) with \(1\leq j<i\leq s\) are the corresponding
size-reduced coefficients of \(C_s\).  Thus \(B_{r,s}\) is
size-reduced.

For \(1\leq q\leq s\), the projected lattice
\(\Lambda_{r+q}(B_{r,s})\), together with its basis, coincides
with \(\Lambda_q(C_s)\) and its basis.  Hence the HKZ
shortest-vector condition holds at every level in the second block.

Now fix \(1\leq p\leq r\).  A basis of
\(\Lambda_p(B_{r,s})\) is
\[
  \vect{a}_i^{(p)}=\vect{e}_i-
  \frac12\sum_{p\leq k<i}\vect{e}_k
  \quad(p\leq i\leq r),
  \qquad
  \vect{c}_j+\vect{u}_p
  \quad(1\leq j\leq s),
\]
where \(\vect{u}_p=\frac12\sum_{i=p}^r \vect{e}_i\).  If an integral
combination of basis vectors of \(\Lambda_p(B_{r,s})\) has a
nonzero coefficient in the second block, its \(W\)-component is a nonzero
vector of \(\Lambda(C_s)\), so its squared norm is at least \(1\) by
\eqref{eq:hs-minimum}.  If all second-block coefficients vanish,
Lemma~\ref{lem:all-half-block} gives the same lower bound.  Since
\(\vect{a}_p^{(p)}=\vect{e}_p\) belongs to \(\Lambda_p(B_{r,s})\), the
minimum of this projected lattice is exactly \(1\).
All projected shortest-vector conditions follow, and \(B_{r,s}\) is
HKZ-reduced.

For the defect, \(\lVert \vect{u}\rVert^2=r/4\), and
\eqref{eq:hs-minimum} gives
\[
  \lVert \vect{c}_j+\vect{u}\rVert^2
  =
  \lVert \vect{c}_j\rVert^2+\lVert \vect{u}\rVert^2
  \geq
  1+\frac r4.
\]
The squared covolume is the product of the squared Gram--Schmidt lengths,
and hence, by \eqref{eq:hs-gso-product},
\[
  \vol(\Lambda(B_{r,s}))^2
  =
  \prod_{j=1}^s d_j
  =
  e^{6(s-1)}s^{-s}.
\]
Combining this identity with \eqref{eq:all-half-defect} proves
\eqref{eq:coupled-defect-bound}.
\end{proof}

\begin{proof}[Proof of Theorem~\ref{thm:asymptotic-growth}]
Define
\begin{equation}
  F_n(r)
  =
  \frac{(r+3)!}{6\cdot4^r}
  \left(1+\frac r4\right)^{n-r}
  (n-r)^{n-r}e^{-6(n-r-1)}.
  \label{eq:finite-term}
\end{equation}
Proposition~\ref{prop:coupled-hkz-basis} gives
\[
  D_n\geq F_n(r), \quad \forall \ 1\leq r\leq n-1.
\]
We next locate the maximum of \(F_n(r)\) asymptotically.
For \(1\leq r\leq n-2\), write \(s=n-r\),
set
\begin{equation}
  R_n(r)
  =
  \frac{F_n(r+1)}{F_n(r)}
  =
  \frac{e^6}{s}
  \left[
    \left(1+\frac1{r+4}\right)
    \left(1-\frac1s\right)
  \right]^{s-1}.
  \label{eq:consecutive-ratio}
\end{equation}
Write \(\Phi_n(r)=\log R_n(r)\) and \(a=r+4\).  Regard \(\Phi_n\)
as a function of \(s=n-r\).  Then
\[
  \frac{d^2\Phi_n}{ds^2}
  =
  \frac{2}{a(a+1)}
  +\frac1{s(s-1)}
  +\frac{(s-1)(2a+1)}{a^2(a+1)^2}
  >0.
\]
Hence \(\{r:R_n(r)<1\}\) is an interval.  Since \(F_n\) increases when
\(R_n(r)>1\) and decreases when \(R_n(r)<1\), its only possible global
maxima are its first internal peak and the endpoint \(r=n-1\).

Let
\[
  \rho_n
  =
  \min\{1\leq r\leq n-2:R_n(r)<1\}.
\]
For \(r\asymp n/\log n\), expansion of \eqref{eq:consecutive-ratio} gives
\[
  \Phi_n(r)
  =
  5-\log(n-r)+\frac{n-r}{r+4}+o(1).
\]
It is positive at \(r=\lfloor n/(2\log n)\rfloor\) and negative at
\(r=\lceil2n/\log n\rceil\).  Thus \(\rho_n\) exists for all sufficiently
large \(n\).  In this range, the explicit formula
\eqref{eq:consecutive-ratio} gives uniformly
\(\Phi_n(r+1)-\Phi_n(r)=O((\log n)^2/n)=o(1)\).  Hence the first crossing
satisfies
\[
  \frac{n-\rho_n}{\rho_n+4}
  =
  \log(n-\rho_n)-5+o(1).
\]
Consequently,
\[
  \rho_n
  =
  \frac{n}{\log n-4+o(1)}
\]
as \(n\to\infty\).

Stirling's formula applied to
\eqref{eq:finite-term} gives
\[
  \log F_n(\rho_n)
  =
  2n\log n-n\log\log n-(7+\log 4)n+o(n),
\]
whereas \(\log F_n(n-1)=n\log n+O(n)\).

Thus
\[
  \log D_n
  \geq
  \max\limits_r \log F_n(r)
  =
  2n\log n-n\log\log n-(7+\log 4)n+o(n).
\]
Hence
\[
  \liminf_{n\to\infty}
  \frac{\log D_n}{n\log n}
  \geq2.
\]

For the reverse inequality, Lagarias, Lenstra, and Schnorr
\cite[Theorem~2.3]{lagariaslenstraschnorr1990} give
\[
  D_n
  \leq
  \gamma_n^n\frac{(n+3)!}{6\cdot4^n}.
\]
Using the classical estimate \(\gamma_n\leq n\)
\cite[p.~20]{conwaysloane1993}, \cite[Corollary~4.1]{Zong2025} together with
Stirling's formula gives
\[
  \log D_n\leq2n\log n+O(n),
\]
and hence
\[
  \limsup_{n\to\infty}
  \frac{\log D_n}{n\log n}
  \leq2.\qedhere
\]
\end{proof}

\begin{remark}
The coupling of the two HKZ blocks gives a stronger lower bound for \(D_n\)
than treating either block independently.  The all-half block gives
\(\log\Delta(A_n)=n\log n+O(n)\), while
Lemma~\ref{lem:hanrot-stehle-block} gives
\(\log\Delta(C_n)\geq n\log n-6n+O(1)\).  Both have leading term
\(n\log n\), whereas the lower bound given by \(\log\Delta(B_{r,s})\) has
leading term \(2n\log n\).
\end{remark}

\section{The Exact Four-Dimensional Constant \texorpdfstring{\(D_4\)}{D4}}
\label{sec:rank-four}

\begin{lemma}
\label{lem:rank-four-lower-bound}
There exists a four-dimensional HKZ-reduced basis \(B\) such that
\[
  \Delta(B)=\frac{4375}{1024}.
\]
Consequently, \(D_4\geq4375/1024\).
\end{lemma}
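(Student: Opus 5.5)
The plan is to exhibit the basis explicitly through its Gram--Schmidt data and then verify the two conditions of Definition~\ref{def:hkz} directly. By the closing identity of Section~\ref{sec:preliminaries}, \(\Delta(B)=\prod_{i}\lVert\vect{b}_i\rVert^2/d_i\), so only the data \((d_1,\ldots,d_4;\mu_{i,j})\) matter. I would normalize \(d_1=1\). The factorization \(4375/1024=(5/4)^4\cdot(7/4)\) suggests taking the size-reduction coefficients at the extreme values \(\mu_{i,j}=\pm\tfrac12\), as in Lemma~\ref{lem:all-half-block} and Proposition~\ref{prop:coupled-hkz-basis}. It also suggests that \(d_2,d_3,d_4\) are rational numbers with small denominators (powers of \(2\) and \(5\)) that make \(\lVert\vect{b}_i\rVert^2/d_i\) nice.

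The concrete order of steps is as follows.
\begin{enumerate}
  \item Write the defect as an explicit rational function of \((d_2,d_3,d_4)\) and the sign pattern of the \(\mu_{i,j}\). For example, with all \(\lvert\mu_{i,j}\rvert=\tfrac12\),
  \[
    \lVert\vect{b}_4\rVert^2=\tfrac14+\tfrac{d_2}4+\tfrac{d_3}4+d_4 ,
  \]
  and similarly for \(\vect{b}_2,\vect{b}_3\).
  \item Maximize the defect subject to the HKZ inequalities \(Q_i(\vect{z})\ge d_i\). Lowering \(d_3,d_4\) below \(1\) increases the defect but eventually violates these inequalities, so the optimum should sit where several of them are simultaneously tight, meaning some short combinations such as \(\vect{b}_4\pm\vect{b}_3\) or \(\vect{b}_4-\vect{b}_3+\vect{b}_1\) (with appropriate signs) have projected norm exactly \(d_i\).
  \item Choose the sign pattern so that these tight vectors are as long as possible. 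Note that the all-\(-\tfrac12\) pattern with equal \(d_i\) does not give \(4375/1024\); the signs in the last row must be mixed so that cancellations do not create short vectors.
  \item Solve the resulting small polynomial system to get the exact values of \(d_2,d_3,d_4\) and confirm that the product equals \(4375/1024\).
\end{enumerate}

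Verification then proceeds level by level, from the top level down. Size reduction is immediate from the chosen \(\mu_{i,j}\). Levels \(4\) and \(3\) are trivial or two-dimensional: for the two-dimensional case it suffices that the \(2\times2\) form is Lagrange-reduced with \(d_3\le Q_3(z_3,z_4)\). At levels \(2\) and \(1\), I bound the coefficients using the Lagrange expansion \eqref{eq:prelim-lagrange-expansion}. If \(Q_i(\vect{z})<d_i\), then \(d_4z_4^2\), \(d_3(z_3+\mu_{4,3}z_4)^2\), and so on are each \(<d_i\). This confines \(\vect{z}\) to a finite box, for instance \(\lvert z_4\rvert\le\lfloor (d_1/d_4)^{1/2}\rfloor\), followed by back-substitution. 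Enumerating that box, which has only a handful of vectors, shows \(Q_i\ge d_i\).

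I expect the main obstacle to be step 2: actually locating the extremal parameters, meaning which constraints are tight and which sign pattern is optimal. Once the parameters are known, the verification is a routine finite check. I would first try the configuration in which \(\vect{b}_4\) has all \(\lvert\mu_{4,j}\rvert=\tfrac12\) with a mixed sign. I would impose tightness of the level-\(1\) and level-\(2\) constraints on the two or three shortest competing vectors, and solve for \(d_3,d_4\) with \(d_2\) determined by the factor \(5/4\), that is \(d_2=1\).
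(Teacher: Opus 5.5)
There is a genuine gap. The lemma is an existence statement, so its proof has to exhibit a specific basis and check it. Your proposal never fixes the basis, and it explicitly leaves the decisive step (finding the extremal parameters) open.

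Moreover, the one concrete commitment you make, $d_2=d_1$ to produce a factor $\frac54$, cannot reach the target. With $d_1=d_2=1$, size reduction gives
\[
  \Delta(B)\le\frac54\Bigl(1+\frac1{2d_3}\Bigr)\Bigl(1+\frac{2+d_3}{4d_4}\Bigr).
\]
The KZ inequalities $d_3\ge\frac34d_2$, $d_4\ge\frac23d_2$, $d_4\ge\frac34d_3$ bound this by its value $\frac{1625}{384}$ at $d_3=\frac34$, $d_4=\frac23$, and $\frac{1625}{384}<\frac{4375}{1024}$. For instance, making $d_3+\frac{d_2}4\ge d_1$ and $d_4+\frac{d_3}4\ge d_2$ tight with $d_2=1$ gives $d_3=\frac34$, $d_4=\frac{13}{16}$, and only $\Delta=\frac{50}{13}$.

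The factorization $(\frac54)^4\cdot\frac74$ is misleading: the actual factors $\lVert\vect b_i\rVert^2/d_i$ are $\frac{21}{16},\frac{25}{16},\frac{25}{12}$. Your sign heuristic is also vacuous. Flipping $\vect b_1,\vect b_2,\vect b_3$ changes the signs of $\mu_{4,1},\mu_{4,2},\mu_{4,3}$ independently, and the all-$(-\frac12)$ pattern does work once the $d_i$ are right.

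What is missing is the right choice of tight constraints. Keep all $\lvert\mu_{i,j}\rvert=\frac12$, let $d_2$ float, and make one competing vector tight at each of levels $3,2,1$:
\begin{enumerate}
  \item $\lVert\pi_3(\vect b_4)\rVert^2=d_3$;
  \item a vector $\pi_2(\vect b_2\pm\vect b_3\pm\vect b_4)$ with zero $\vect b_2^*$-coefficient and squared norm $d_2$;
  \item a vector such as $\vect b_2+\vect b_3$ or $\vect b_1+\vect b_2+\vect b_3$ (according to the signs) with squared norm $d_1$.
\end{enumerate}
This is the linear system $d_4+\frac{d_3}4=d_3$, $d_4+\frac{d_3}4=d_2$, $d_3+\frac{d_2}4=d_1$. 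Its solution $(d_1,d_2,d_3,d_4)\propto(20,16,16,12)$ is exactly the paper's basis.

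The real sign condition is that the tight level-$2$ vector involve $\vect b_2$. If it were $\pi_2(\vect b_3\pm\vect b_4)$, then since $\mu_{3,1}\pm\mu_{4,1}\in\{-1,0,1\}$, some $z_1\vect b_1+\vect b_3\pm\vect b_4$ would have squared norm $d_2<d_1$.

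Once the basis is fixed, your level-by-level enumeration would complete the proof. The paper shortens it by noting that $Q_2$ takes values in $4\Z$ in this scaling. Hence at level $1$ only the four minimal vectors $\pm(1,0,0)$ and $\pm(1,1,-1)$ of $Q_2$ need checking, and each gives $Q\ge21>20$.
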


\begin{proof}
We explicitly construct \(B\).  In \(\R^4\), let
\[
  \vect{b}_1=(2\sqrt5,0,0,0),\qquad
  \vect{b}_2=(-\sqrt5,4,0,0),
\]
\[
  \vect{b}_3=(\sqrt5,-2,4,0),\qquad
  \vect{b}_4=(-\sqrt5,2,2,2\sqrt3),
\]
and set \(B=(\vect{b}_1,\vect{b}_2,\vect{b}_3,\vect{b}_4)\).
Then every lattice vector \(\sum_{i=1}^{4}z_i\vect{b}_i\in\Lambda(B)\)
has squared norm
\begin{align*}
  \left\lVert\sum_{i=1}^{4}z_i\vect{b}_i\right\rVert^2
  & =Q(z_1,z_2,z_3,z_4)\\
  & =20\left(z_1-\frac{z_2}{2}
              +\frac{z_3}{2}-\frac{z_4}{2}\right)^2\\
  &\quad+16\left(z_2-\frac{z_3}{2}+\frac{z_4}{2}\right)^2\\
  &\quad+16\left(z_3+\frac{z_4}{2}\right)^2+12z_4^2.
\end{align*}
Thus \(B\) is size-reduced.

It remains to verify the shortest-vector
condition in each projected lattice.  The corresponding projected norm
forms are
\[
  Q_4=12z_4^2,
  \qquad
  Q_3=4(2z_3+z_4)^2+12z_4^2,
  \qquad
  Q_2=4(2z_2-z_3+z_4)^2+Q_3.
\]
Clearly,
\[
  \min\limits_{z_4\in\Z\setminus\{0\}}
  Q_4(z_4)=12=d_4.
\]
By considering separately \(|z_4|=0\), \(|z_4|=1\), and
\(|z_4|\geq2\), we obtain
\[
  \min_{\vect{z}\in\Z^{2}\setminus\{\vect{0}\}}
  Q_3(\vect{z})=16=d_3,
  \quad\text{attained, for example, at }(z_3,z_4)=(0,1).
\]
Now for \((z_2,z_3,z_4)\neq(0,0,0)\), if
\((z_3,z_4)\neq(0,0)\), then \(Q_2\geq Q_3\geq16\).  Otherwise,
\(z_2\neq0\) and \(Q_2=16z_2^2\geq16\).  Hence
\[
\begin{gathered}
  \min_{\vect{z}\in\Z^{3}\setminus\{\vect{0}\}}
  Q_2(\vect{z})=16=d_2,
  \\
  \text{attained if and only if }
  (z_2,z_3,z_4)=\pm(1,0,0),\ \pm(1,1,-1).
\end{gathered}
\]
Finally, write
\[
  Q=5(2z_1-z_2+z_3-z_4)^2+Q_2.
\]
If \((z_2,z_3,z_4)=(0,0,0)\), then a nonzero integer vector has
\(z_1\neq0\), and \(Q=20z_1^2\geq20\).  Otherwise \(Q_2\geq16\).
Moreover, \(Q_2\) is a multiple of \(4\), so \(Q_2>16\) implies
\(Q_2\geq20\).  It remains only to consider \(Q_2=16\).  Direct
substitution of the four possibilities displayed above gives \(Q\geq21\).
Therefore,
\[
  \min_{\vect{z}\in\Z^{4}\setminus\{\vect{0}\}}
  Q(\vect{z})=20=d_1,
  \quad\text{attained, for example, at }(1,0,0,0).
\]

These calculations show that \(B\) is HKZ-reduced by
Definition~\ref{def:hkz}.  Moreover, direct calculation gives
\[
  \Delta(B)
  =
  \frac{20\cdot21\cdot25\cdot25}
       {20\cdot16\cdot16\cdot12}
  =
  \frac{4375}{1024}. \qedhere
\]
\end{proof}

Set
\[
  T=\frac{4375}{1024}.
\]
To prove the upper bound, normalize \(d_1=1\) and write the norm form of a
four-dimensional size-reduced basis \(B\) as
\[
  Q(\vect{z})
  =
  \sum_{j=1}^4d_j
  \left(z_j+\sum_{k>j}\mu_{k,j}z_k\right)^2,
  \qquad |\mu_{k,j}|\leq\frac12.
\]
Write the parameter vector
\[
  \vect{p}=
  (d_2,d_3,d_4,
  \mu_{2,1},\mu_{3,1},\mu_{3,2},
  \mu_{4,1},\mu_{4,2},\mu_{4,3})^{\mathsf T}.
\]
In these parameters the orthogonality defect is
\begin{align*}
  \delta(\vect{p})={}&
  \left(1+\frac{\mu_{2,1}^2}{d_2}\right)
  \left(1+
    \frac{\mu_{3,1}^2+d_2\mu_{3,2}^2}{d_3}\right)\\
  &\times
  \left(1+
    \frac{\mu_{4,1}^2+d_2\mu_{4,2}^2+d_3\mu_{4,3}^2}{d_4}\right)
  =\Delta(B).
\end{align*}

The classical KZ inequalities of Korkine and Zolotareff
\cite[pp.~372--376]{korkinzolotarev1873} imply that every HKZ-reduced form
satisfies
\begin{equation}
  d_2\geq\frac34,\qquad
  d_3\geq\frac34d_2,\quad d_3\geq\frac23,\qquad
  d_4\geq\frac34d_3,\quad d_4\geq\frac23d_2,\quad
  d_4\geq\frac12.
  \label{eq:rank-four-kz-inequalities}
\end{equation}
Novikova
\cite[Theorem~1 and Tables~1--3]{novikova1983}
gave a finite characterization that replaces the infinitely many
shortest-vector conditions of HKZ 
reduction, in dimension four, by the following \(21\)
inequalities:
\begin{equation}
  Q_i(\vect{z})\geq d_i
  \qquad
  (\vect{z}\in X_{5-i},\ i=1,2,3).
  \label{eq:rank-four-finite-kz-constraints}
\end{equation}
Here
\[
  X_2=\{(0,1)\},
  \qquad
  X_3=\{(0,1,0),(0,0,1),(0,1,1),(1,1,1)\},
\]
and
\(X_4=\{(v_1,v_2,v_3,0):(v_1,v_2,v_3)\in X_3\}\cup X_4^\circ\), where
\[
\begin{split}
  X_4^\circ=\{&
  (-1,-1,0,1),(0,-1,0,1),(0,0,0,1),(0,1,0,1),\\
  &(1,1,0,1),(-1,-1,1,1),(0,-1,1,1),(-1,0,1,1),\\
  &(0,0,1,1),(1,0,1,1),(0,1,1,1),(1,1,1,1)\}.
\end{split}
\]
Pendavingh and van Zwam
\cite[Sec.~2, Theorem~2.1]{pendavinghvanzwam2007} restated this finite
characterization and checked the low-dimensional systems; see also van Zwam
\cite{vanzwam2006} for implementation details and numerical data.
Thus, the size-reduction conditions, \eqref{eq:rank-four-kz-inequalities}, and \eqref{eq:rank-four-finite-kz-constraints} provide a finite characterization of the parameter vectors \(\vect{p}\) that represent normalized four-dimensional HKZ-reduced forms.

Changing basis-vector signs preserves both HKZ reduction and orthogonality
defect.  We may therefore assume
\(\mu_{2,1},\mu_{3,2},\mu_{4,3}\leq0\).  Put
\[
  I_+=\left[0,\frac12\right],
  \qquad
  I_-=\left[-\frac12,0\right].
\]
For \(\sigma=(\sigma_1,\sigma_2,\sigma_3)\in\{+,-\}^3\), let
\(\mathcal C_\sigma\) be the rational box in the coordinates of
\(\vect{p}\) defined by
\begin{equation}
  \frac34\leq d_2\leq2,\qquad
  \frac23\leq d_3\leq2,\qquad
  \frac12\leq d_4\leq2,
  \label{eq:rank-four-compact-box}
\end{equation}
\[
  \mu_{2,1},\mu_{3,2},\mu_{4,3}\in I_-,
  \qquad
  \mu_{3,1}\in I_{\sigma_1},\quad
  \mu_{4,1}\in I_{\sigma_2},\quad
  \mu_{4,2}\in I_{\sigma_3}.
\]

\begin{lemma}
\label{lem:rank-four-compact-reduction}
Let \(B\) be a normalized four-dimensional HKZ-reduced basis satisfying
\(\Delta(B)\geq T\).  After changes of basis-vector signs, its parameter
vector $\vect{p}$ belongs to one of the eight boxes \(\mathcal C_\sigma\).
\end{lemma}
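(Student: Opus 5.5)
The plan has three steps: normalize signs, get lower bounds on the $d_i$ from the KZ inequalities, and get upper bounds on the $d_i$ from the hypothesis $\Delta(B)\geq T$.

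\textbf{Sign normalization.} Replacing $\vect{b}_i$ by $-\vect{b}_i$ leaves each $d_j$ unchanged. It multiplies $\mu_{i,j}$ $(j<i)$ and $\mu_{k,i}$ $(k>i)$ by $-1$, so it preserves size reduction, HKZ reduction and $\Delta$. I would flip signs in order. First flip $\vect{b}_2$ if $\mu_{2,1}>0$. Then flip $\vect{b}_3$ if the current $\mu_{3,2}>0$; this does not touch $\mu_{2,1}$. Finally flip $\vect{b}_4$ if the current $\mu_{4,3}>0$; this does not touch $\mu_{2,1}$ or $\mu_{3,2}$. Afterwards $\mu_{2,1},\mu_{3,2},\mu_{4,3}\in I_-$. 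Size reduction places each of $\mu_{3,1},\mu_{4,1},\mu_{4,2}$ in $I_+$ or $I_-$, which selects $\sigma$.

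\textbf{Lower bounds on the $d_i$.} These are exactly the KZ inequalities \eqref{eq:rank-four-kz-inequalities}: $d_2\geq 3/4$, $d_3\geq 2/3$, $d_4\geq 1/2$.

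\textbf{Upper bounds on the $d_i$.} This is the substantive step, and here the hypothesis $\Delta(B)\geq T$ enters. Since $|\mu|\leq 1/2$,
\[
  \Delta(B)\leq
  \Bigl(1+\frac1{4d_2}\Bigr)
  \Bigl(1+\frac{1+d_2}{4d_3}\Bigr)
  \Bigl(1+\frac{1+d_2+d_3}{4d_4}\Bigr).
\]
I would show that this bound is $<T\approx 4.27$ whenever one of $d_2,d_3,d_4$ exceeds $2$. The method is to bound each factor using the KZ inequalities $d_3\geq\frac34 d_2$, $d_4\geq\frac34 d_3$, $d_4\geq\frac23 d_2$ and the minimal values $d_2\geq 3/4$, $d_3\geq 2/3$, $d_4\geq 1/2$. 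The rough estimates are as follows.

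\emph{Case $d_4>2$.} The third factor is at most $1+(1+d_2+d_3)/8$. Substituting $d_2\leq\frac32 d_4$ and $d_3\leq\frac43 d_4$ from the KZ inequalities leaves a ratio $(1+d_2+d_3)/(4d_4)$ bounded by roughly $1/8+3/8+1/3$. The first two factors are at most $\frac43$ and $1+\frac{1+d_2}{4d_3}$, controlled via $d_3\geq\frac34d_2$. This probably needs a short split on the size of $d_2$.

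\emph{Case $d_3>2$.} Use $d_4\geq \frac34 d_3$ in the third factor, so the ratio $(1+d_2+d_3)/(4d_4)$ is at most $(1+d_2+d_3)/(3d_3)$. Then use $d_2\leq\frac43 d_3$ and $d_3>2$ to bound the second factor.

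\emph{Case $d_2>2$.} The first factor is at most $\frac98$. Using $d_3\geq \frac34 d_2$, the second factor is at most $1+(1+d_2)/(3d_2)<1.5$. Using $d_4\geq\max(\frac34d_3,\frac23d_2)$, the third factor is about $1.84$. The product is about $3.1<T$.

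\textbf{Main obstacle.} In each case the resulting expression is a rational function of the one or two free $d$'s, and I expect it to be decreasing in the large variable, so checking the boundary values suffices. The main obstacle is making these monotonicity and boundary checks airtight where the margin to $T$ is smallest, presumably the case $d_4>2$ with $d_2,d_3$ near their lower bounds. If a single global estimate is too lossy there, I would subdivide the range of $(d_2,d_3)$ into a few rational intervals and bound each factor by its value at the interval endpoints.
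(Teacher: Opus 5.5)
Your proposal is correct and follows the paper's proof. Both use the same size-reduction bound on $\Delta(B)$, the KZ inequalities for the lower bounds, and a case split according to which $d_i$ exceeds $2$ (your sign normalization just makes explicit what the paper does before the lemma).

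The one difference is the case where $d_4$ is large: the paper invokes $D_3=25/12$ there, but your KZ-only estimate already suffices with no split on $d_2$, since $\frac43\cdot\frac{41}{24}\cdot\frac{11}{6}=\frac{451}{108}<T$. A minor numerical slip: in the case $d_2>2$ the third-factor bound is $\frac{91}{48}\approx1.90$ rather than about $1.84$. The product is then about $3.2$, which is still below $T$.
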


\begin{proof}
It remains only to establish the bounds on \(d_2,d_3,d_4\)
in \eqref{eq:rank-four-compact-box}. The lower bounds 
follow directly from
\eqref{eq:rank-four-kz-inequalities}. For the upper bounds, size reduction gives
\[
  \Delta(B)\leq
  \left(1+\frac1{4d_2}\right)
  \left(1+\frac{1+d_2}{4d_3}\right)
  \left(1+\frac{1+d_2+d_3}{4d_4}\right).
\]
Together with \eqref{eq:rank-four-kz-inequalities}, for the three separate cases 
\(d_2\geq2\), \(d_2<2\leq d_3\), and
\(d_2,d_3<2\leq d_4\), we have
\[
  \Delta(B)\leq
  \frac{1431}{384},\quad
  \frac{583}{144},\quad
  \frac{325}{96}.
\]
The last estimate also uses \(D_3=25/12\). Since all three upper bounds are 
strictly smaller than \(T\), no normalized four-dimensional HKZ-reduced form 
with defect at least \(T\) can fall into any of these cases. 
\end{proof}

Let \(\varepsilon_0=1/1024\).  For
\(\eta,\theta\in\{-1,1\}\), write
\begin{equation}
  \vect{p}_{\eta,\theta}
  =\left(
    \frac45,\frac45,\frac35,
    -\frac12,\frac\eta2,-\frac12,
    \frac\theta2,-\frac12,-\frac12
  \right)^{\mathsf T} \in \mathcal{C_\sigma}.
  \label{eq:rank-four-corners}
\end{equation}
Let \(\mathcal N_{\eta,\theta}\) be the intersection of \(\mathcal C_\sigma\)
 with the closed coordinate box of radius
\(\varepsilon_0\) centered at \(\vect{p}_{\eta,\theta}\).

\begin{lemma}
\label{lem:rank-four-rational-cover}
Every HKZ-feasible point in the union of the eight boxes
\(\mathcal C_\sigma\) that satisfies \(\delta(\vect{p})\geq T\) belongs to
one of the four boxes \(\mathcal N_{\eta,\theta}\).
\end{lemma}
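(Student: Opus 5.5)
This is a finite but delicate verification over the compact region \(\mathcal C=\bigcup_\sigma\mathcal C_\sigma\), and I would carry it out as a rigorous branch-and-bound certificate in exact rational arithmetic.

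\textbf{Setup.} Work in the 9-dimensional parameter space. The feasible set is cut out by three families of polynomial constraints:
\begin{itemize}
\item the size-reduction constraints;
\item the KZ inequalities \eqref{eq:rank-four-kz-inequalities};
\item the 21 finite constraints \eqref{eq:rank-four-finite-kz-constraints}.
\end{itemize}
The last two families are polynomial in \(\vect{p}\): each \(Q_i(\vect z)\) is a polynomial in the \(d_j\) and \(\mu_{k,j}\). We must show that the set \(\{\vect p\in\mathcal C:\ \text{feasible},\ \delta(\vect p)\ge T\}\) lies inside \(\bigcup\mathcal N_{\eta,\theta}\).

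\textbf{Subdivision and pruning.} Recursively bisect each \(\mathcal C_\sigma\) into rational sub-boxes \(K\). For each sub-box, compute rigorous interval bounds for two kinds of quantities:
\begin{enumerate}
\item an upper bound \(\overline\delta(K)\) for \(\delta\);
\item a lower bound for \(Q_i(\vect z)-d_i\) for each constraint.
\end{enumerate}
Discard \(K\) if either \(\overline\delta(K)<T\) or some constraint satisfies \(\max_K\bigl(Q_i(\vect z)-d_i\bigr)<0\), so that \(K\) contains no feasible point. Also discard \(K\) if it lies entirely inside some \(\mathcal N_{\eta,\theta}\). Otherwise, bisect along the coordinate with the largest width-weighted influence and repeat.

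The bounds should be cheap and sharp. \(\delta\) is a product of three factors, and each factor is monotone in every variable on a sign-fixed box. It increases in \(\mu^2\), decreases in \(d_4\), and so on. So the interval bound is exact at corners, except for the coupling through \(d_2\) and \(d_3\) appearing in both numerators and denominators. Evaluating each factor's extreme over the box corners is still a valid upper bound. Likewise each \(Q_i(\vect z)-d_i\) is a quadratic in the \(\mu\) with coefficients linear in the \(d\), so natural interval extensions suffice away from tangencies.

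Fixing signs via the \(\mathcal C_\sigma\) is what makes the monotonicity, and hence the tight corner bounds, available. The claim then follows once every branch either is pruned or terminates inside some \(\mathcal N_{\eta,\theta}\).

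\textbf{Main obstacle.} The hard part is the neighbourhoods of the maximizers \(\vect p_{\eta,\theta}\). There, \(\delta(\vect p_{\eta,\theta})=T\) exactly and several constraints are active simultaneously. The extremal basis of Lemma~\ref{lem:rank-four-lower-bound} has \(d_1=20\), \(d_2=d_3=16\) and several ties, e.g. \(Q_2=16\) at \(\pm(1,1,-1)\). So near these points the pruning tests fail by margins of order the box width, and naive subdivision will not terminate without reaching boxes of width about \(\varepsilon_0\).

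I would first check that boxes exactly at scale \(\varepsilon_0\) can be discarded. Suppose \(K\) meets the boundary of a \(\mathcal N_{\eta,\theta}\) without lying inside it. Then on \(K\) some coordinate differs from \(\vect p_{\eta,\theta}\) by roughly \(\varepsilon_0\), and a first-order (linearized) Lagrangian argument should show that \(\delta<T\) on feasible points there. Concretely:
\begin{itemize}
\item write \(\delta\le T+\nabla\delta\cdot(\vect p-\vect p_{\eta,\theta})+O(\varepsilon_0^2)\);
\item write each active constraint as \(c_\ell(\vect p)\ge0\);
\item exhibit nonnegative multipliers \(\lambda_\ell\) with \(\nabla\delta=-\sum\lambda_\ell\nabla c_\ell-\nu\), where \(\nu\) is a strictly positive combination in the directions leaving the box.
\end{itemize}
This dual certificate turns the local pruning into a linear inequality plus an explicit quadratic remainder bound. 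Away from these four neighbourhoods, plain interval branch-and-bound with exact rationals should close all remaining boxes. The total box count would be reported as the certificate.
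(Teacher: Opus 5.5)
Your plan is essentially the paper's proof: an exact-rational branch-and-bound over the eight boxes $\mathcal C_\sigma$. It discards a subbox when the decoupled bound $\mathcal U(\mathcal C)$ (the product of the three per-factor corner maxima) is below $T$, or when a single KZ inequality or finite constraint fails throughout the subbox; for that test you need an upper bound on $Q_i(\vect z)-d_i$, not the lower bound you wrote. It retains subboxes lying inside some $\mathcal N_{\eta,\theta}$.

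The extra dual certificate for boxes straddling $\partial\mathcal N_{\eta,\theta}$ is not needed for this lemma. Plain bisection already closes every box: the paper's table reports zero unresolved boxes. The linearized Lagrangian argument you sketch is essentially what the paper uses next, in Lemma~\ref{lem:rank-four-local-maximum}, to handle the points inside $\mathcal N_{\eta,\theta}$.
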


\begin{proof}
The proof is computer-assisted. We recursively bisect each of the eight
initial boxes \(\mathcal C_\sigma\).  For a rational subbox
\(\mathcal C\), exact interval evaluation gives the upper bound
\begin{align*}
  \mathcal U(\mathcal C)={}&
  \left(1+\frac{\overline{\mu_{2,1}^2}}{\underline d_2}\right)
  \left(1+
    \frac{\overline{\mu_{3,1}^2}
    +\overline d_2\,\overline{\mu_{3,2}^2}}
         {\underline d_3}\right)\\
  &\times
  \left(1+
    \frac{\overline{\mu_{4,1}^2}
    +\overline d_2\,\overline{\mu_{4,2}^2}
    +\overline d_3\,\overline{\mu_{4,3}^2}}
         {\underline d_4}\right)
\end{align*}
for \(\delta(\vect{p})\) throughout \(\mathcal C\).  Here an underline
denotes the lower endpoint of an interval, while an overline denotes the
required upper endpoint or the maximum of a squared coefficient on that
interval.

Each subbox is processed as follows:
\begin{quote}
\small
\begin{enumerate}
  \item If \(\mathcal U(\mathcal C)<T\), record \(\mathcal C\) as below
        the target, and discard \(\mathcal C\). 
  \item If interval evaluation proves that one of the six inequalities
        \eqref{eq:rank-four-kz-inequalities} or one of the finite constraints
        \eqref{eq:rank-four-finite-kz-constraints} fails throughout
        \(\mathcal C\), record \(\mathcal C\) as HKZ-infeasible, and discard \(\mathcal C\). 
  \item If \(\mathcal C\) is contained in one of the four boxes
        \(\mathcal N_{\eta,\theta}\), record it as localized.
  \item Otherwise, bisect \(\mathcal C\) at the midpoint of a
        coordinate which is widest relative to the full range allowed for that coordinate
        interval and process both children in the same way.
\end{enumerate}
\end{quote}

The first column of the following table gives the initial sign pattern
\(\sigma\).  The remaining columns count the terminal subboxes
classified by the procedure.
\[
\begin{array}{c|r|r|r|r}
\text{region } \mathcal{C}_{\sigma}&\text{HKZ-infeasible}&
\text{below target}&\text{localized}&\text{unresolved}\\ \hline
\mathtt{+++}& 1329&  3364&  0&0\\
\mathtt{++-}&28594& 50507&255&0\\
\mathtt{+-+}& 1143&  2919&  0&0\\
\mathtt{+--}&28854& 51313&255&0\\
\mathtt{-++}& 1143&  2937&  0&0\\
\mathtt{-+-}&29140& 51719&255&0\\
\mathtt{--+}& 1329&  3368&  0&0\\
\mathtt{---}&29008& 50877&255&0\\ \hline
\text{total}&120540&217004&1020&0
\end{array}
\]
The unresolved count is zero for every initial box.  Consequently, the
terminal subboxes form a complete finite cover of $\mathcal{C}_{\sigma}$, and every HKZ-feasible point
with defect at least \(T\) is contained in one of the four local boxes.  
The code implementing this exact
verification is included in Appendix~\ref{app:rational-covering-code}.
\end{proof}

\begin{lemma}
\label{lem:rank-four-local-maximum}
For every \(\eta,\theta\in\{-1,1\}\), an HKZ-feasible point in
\(\mathcal N_{\eta,\theta}\) satisfies
\[
  \delta(\vect{p})\leq T.
\]
Equality holds only at the center \(\vect{p}_{\eta,\theta}\).
\end{lemma}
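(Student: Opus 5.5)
The plan is a first-order (KKT-type) vertex argument at \(\vect p_{\eta,\theta}\), with an explicit remainder estimate valid on the whole box \(\mathcal N_{\eta,\theta}\) of radius \(\varepsilon_0=1/1024\). The underlying picture is simple. For fixed \(d_2,d_3,d_4\), \(\delta\) increases in every \(\mu_{k,j}^2\) and decreases in every \(d_j\). At \(\vect p_{\eta,\theta}\) all six coefficients sit on the boundary \(|\mu_{k,j}|=\frac12\), and the \(d_j\) are pushed down as far as the HKZ constraints allow. By the scaling in Lemma~\ref{lem:rank-four-lower-bound} (divide by \(20\)), the center corresponds to the extremal basis constructed there, so \(\delta(\vect p_{\eta,\theta})=T\).

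\textbf{Step 1: find the active constraints.} Among the size-reduction bounds, the inequalities \eqref{eq:rank-four-kz-inequalities} and the finite constraints \eqref{eq:rank-four-finite-kz-constraints}, I would list exactly those that are tight at \(\vect p_{\eta,\theta}\). The six sign-fixed bounds \(\pm\mu_{k,j}\leq\frac12\) are tight. Three further tight constraints should involve the \(d_j\):
\begin{itemize}
\item the level-3 constraint \(Q_3(0,1)\geq d_3\), i.e.\ \(d_4+d_3\mu_{4,3}^2\geq d_3\), which reads \(\frac35+\frac45\cdot\frac14=\frac45\) at the center;
\item a level-2 constraint from \(X_3\) attained at the minimal vector \((1,1,-1)\) (up to the sign convention);
\item level-1 constraints \(Q(\vect z)\geq1\) from \(X_4\) where equality holds, namely those \(\vect z\) with \(Q_2=20\) in the unscaled form and vanishing first term.
\end{itemize}
I expect these nine or more tight constraints \(g_i(\vect p)\geq0\) to have gradients spanning \(\R^9\).

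\textbf{Step 2: exact multipliers.} Working with \(f=\log\delta\), I would solve in exact rationals (the entries involve \(\frac45,\frac35,\frac12\)) for multipliers \(\lambda_i>0\) with
\[
  \nabla f(\vect p_{\eta,\theta})=-\sum_i\lambda_i\nabla g_i(\vect p_{\eta,\theta}).
\]
It is not enough that such multipliers exist. I need the strong form: \(\langle\nabla f,\vect v\rangle\leq-c\lVert\vect v\rVert_\infty\) for some explicit \(c>0\) and every \(\vect v\) with \(\langle\nabla g_i,\vect v\rangle\geq0\) for all active \(i\). This follows if the active gradients span and all multipliers are strictly positive, since the linearized feasible cone is then pointed and \(-\nabla f\) lies in its dual interior. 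If the active set is larger than nine, I would choose a basis subset with positive multipliers.

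\textbf{Step 3: Taylor estimate and conclusion.} For feasible \(\vect p=\vect p_{\eta,\theta}+\vect v\) with \(\lVert\vect v\rVert_\infty\leq\varepsilon_0\), all \(g_i\) and \(f\) are polynomial or rational with explicitly bounded second derivatives on \(\mathcal N_{\eta,\theta}\). Hence \(\langle\nabla g_i,\vect v\rangle\geq -M_i\lVert\vect v\rVert_\infty^2\), and
\[
  f(\vect p)-f(\vect p_{\eta,\theta})\leq\langle\nabla f,\vect v\rangle+M\lVert\vect v\rVert_\infty^2 .
\]
Combining with the multiplier identity gives
\[
  f(\vect p)-\log T\leq-c'\lVert\vect v\rVert_\infty+M'\lVert\vect v\rVert_\infty^2 ,
\]
which is negative for \(0<\lVert\vect v\rVert_\infty\leq\varepsilon_0\) provided \(M'\varepsilon_0<c'\). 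That inequality is exactly why the radius \(1/1024\) was chosen. Concretely, I would bound the Hessian entries by interval arithmetic on \(\mathcal N_{\eta,\theta}\), in the same exact rational style as Lemma~\ref{lem:rank-four-rational-cover}. This yields \(\delta\leq T\), with equality only when \(\vect v=0\). The four choices of \((\eta,\theta)\) are handled identically: the sign choices only flip signs of \(\mu_{3,1},\mu_{4,1}\) in the constraints.

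\textbf{Main obstacle.} The hardest step is Step 2: correctly identifying the full active set among the 21 finite constraints, given the signs \(\eta,\theta\), and verifying that the multipliers are strictly positive with a usable margin \(c'\). If the vertex turns out to be degenerate (some multiplier zero, or the gradients not spanning), I would first try adding the next nearly-active Novikova constraint. Failing that, I would handle the degenerate direction by a second-order argument along that face.
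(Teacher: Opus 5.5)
Your plan is essentially the paper's proof. The paper uses exactly nine active constraints: the six size-reduction bounds $x_j\geq0$, plus $Q_3(0,1)\geq d_3$, $Q_2(1,1,1)\geq d_2$, and $Q_1(1,1,1,0)\geq1$ or $Q_1(0,1,1,0)\geq1$ according to $\eta$. It writes the linear part of $f=\log\delta$ as a combination of their linearizations with strictly negative coefficients, and closes with explicit bounds on the constraint remainders and on the Hessian of $f$ over the radius-$1/1024$ box, working in $\lVert\cdot\rVert_1$ rather than $\lVert\cdot\rVert_\infty$. One correction to your heuristic picture, which the argument never uses: at the center $\partial f/\partial d_2=43/420>0$, so $\delta$ is not decreasing in $d_2$, and increasing $d_2$ is blocked by the level-2 constraint $Q_2(1,1,1)\geq d_2$ rather than by $d_2$ being pushed to a lower bound.
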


\begin{proof}
We treat the four boxes simultaneously.  In
\(\mathcal N_{\eta,\theta}\), write
\[
  d_2=\frac45+\beta,\qquad
  d_3=\frac45+\chi,\qquad
  d_4=\frac35+\tau,
\]
and
\[
\begin{gathered}
  \mu_{2,1}=-\frac12+x_1,\qquad
  \mu_{3,1}=\eta\left(\frac12-x_2\right),\qquad
  \mu_{3,2}=-\frac12+x_3,\\
  \mu_{4,1}=\theta\left(\frac12-x_4\right),\qquad
  \mu_{4,2}=-\frac12+x_5,\qquad
  \mu_{4,3}=-\frac12+x_6.
\end{gathered}
\]
Thus \(0\leq x_j\leq\varepsilon_0\) for \(1\leq j\leq6\),
\(|\beta|,|\chi|,|\tau|\leq\varepsilon_0\), and the center \(\vect{p}_{\eta,\theta}\) of \(\mathcal N_{\eta,\theta}\) 
corresponds to
\[
  \vect{y}=(\beta,\chi,\tau,x_1,\ldots,x_6)^{\mathsf T}=\vect{0}.
\]
Write
\begin{align*}
  N_2=\lVert\vect{b}_2\rVert^2&=d_2+\mu_{2,1}^2,\\
  N_3=\lVert\vect{b}_3\rVert^2&=d_3+\mu_{3,1}^2+d_2\mu_{3,2}^2,\\
  N_4=\lVert\vect{b}_4\rVert^2&=d_4+\mu_{4,1}^2+d_2\mu_{4,2}^2+d_3\mu_{4,3}^2,
\end{align*}
and define
\[
  f(\vect{y})=\log\delta(\vect{p})
  =\sum_{i=2}^4\bigl(\log N_i-\log d_i\bigr).
\]

We use three of the finite HKZ constraints:
\[
  g_1=Q_3(0,1)-d_3,\qquad
  g_2=Q_2(1,1,1)-d_2,
\]
and
\[
  g_3=
  \begin{cases}
    Q_1(1,1,1,0)-1,&\eta=-1,\\
    Q_1(0,1,1,0)-1,&\eta=1.
  \end{cases}
\]
$g_i$ are functions of $\vect{y}$. Every HKZ-feasible point satisfies \(g_i\geq0\), and equality of all three constraints
hold at \(\vect{y}=\vect{0}\). Direct expansion gives \(g_i=h_i+E_i\), where the linear parts $h_i$ at the center are
\[
  h_1=-\frac34\chi+\tau-\frac45x_6,\qquad
  h_2=-\beta+\frac14\chi+\tau+\frac45x_6,\qquad
  h_3=\frac14\beta+\chi+\frac45x_3, 
\]
and each \(E_i\) contains only terms of degree at least
two. Also, 
\begin{equation}
  \sum_{i=1}^3|E_i|
  \leq\frac{98}{5}\varepsilon_0\lVert\vect{y}\rVert_1.
  \label{eq:rank-four-local-constraint-remainder}
\end{equation}

Consider the first-order Taylor term of $f(\vect{y})$
\begin{align*}
  \mathcal L(\vect{y})
  ={}&\nabla f(\vect{0})^{\mathsf T}\vect{y}\\
  ={}&
  \frac{43}{420}\beta-\frac14\chi-\frac{13}{15}\tau\\
  &-\frac{20}{21}x_1-\frac45x_2-\frac{16}{25}x_3
  -\frac45x_4-\frac{16}{25}x_5-\frac{16}{25}x_6\\
  ={}&
  -\frac{127}{210}h_1-\frac{11}{42}h_2-\frac{67}{105}h_3\\
  &-\frac{20}{21}x_1-\frac45x_2-\frac{68}{525}x_3
  -\frac45x_4-\frac{16}{25}x_5-\frac{32}{35}x_6.
\end{align*}
Thus $f(\vect{y})$ decreases along every nonzero direction satisfying 
the linearized
conditions \(h_i\geq0\) and \(x_j\geq0\).  We next make this statement
quantitative for the nonlinear constraints.

Define \(h_i^+=\max\{h_i,0\}\), \(h_i^-=\max\{-h_i,0\}\), and
\(\vect{h}_\pm=(h_1^\pm,h_2^\pm,h_3^\pm)^{\mathsf T}\), so that
\(\vect{h}=\vect{h}_+-\vect{h}_-\).  Also put
\(X=\sum_{j=1}^6x_j\).  Since \(g_i=h_i+E_i\geq0\), one has
\(h_i^-\leq|E_i|\).  Therefore
\eqref{eq:rank-four-local-constraint-remainder} yields
\begin{equation}
  \lVert\vect{h}_-\rVert_1
  \leq\frac{98}{5}\varepsilon_0\lVert\vect{y}\rVert_1.
  \label{eq:rank-four-negative-part}
\end{equation}

The three linear forms \(h_i\) determine \(\beta,\chi,\tau\) once
\(x_3,x_6\) are fixed.  More precisely,
\[
  \vect{h}
  =A(\beta,\chi,\tau)^{\mathsf T}
   +\frac45(-x_6,x_6,x_3)^{\mathsf T},
  \qquad
  A=
  \begin{pmatrix}
    0&-3/4&1\\
    -1&1/4&1\\
    1/4&1&0
  \end{pmatrix}.
\]
For the induced matrix \(1\)-norm,
\(\lVert A^{-1}\rVert_1=11/5\), while
\[
  \left\lVert\frac45(-x_6,x_6,x_3)^{\mathsf T}\right\rVert_1
  \leq\frac85X.
\]
It follows that
\begin{align*}
  \lVert\vect{y}\rVert_1&=\lVert(\beta,\chi,\tau)^{\mathsf T}\rVert_1+X\\
  &\leq\frac{11}{5}
  \left(\lVert\vect{h}_+\rVert_1+
        \lVert\vect{h}_-\rVert_1+\frac85X\right)+X\\
  &\leq\frac{113}{25}
  \bigl(\lVert\vect{h}_+\rVert_1+
        \lVert\vect{h}_-\rVert_1+X\bigr).
\end{align*}
The smallest absolute value of coefficients in the displayed decomposition of
\(\mathcal L(\vect{y})\) is \(68/525\), and the largest absolute value of coefficients of
\(h_i\) is \(67/105\).  Hence
\begin{align*}
  \mathcal L(\vect{y})
  &\leq
  -\frac{68}{525}
   \bigl(\lVert\vect{h}_+\rVert_1+X\bigr)
  +\frac{67}{105}\lVert\vect{h}_-\rVert_1\\
  &\leq
  -\frac{68}{2373}\lVert\vect{y}\rVert_1
  +\left(\frac{68}{525}+\frac{67}{105}\right)
   \lVert\vect{h}_-\rVert_1.
\end{align*}
Using \eqref{eq:rank-four-negative-part}, we obtain
\begin{equation}
  \mathcal L(\vect{y})
  \leq
  \left[
    -\frac{68}{2373}
    +\left(\frac{68}{525}+\frac{67}{105}\right)
      \frac{98}{5}\varepsilon_0
  \right]\lVert\vect{y}\rVert_1.
  \label{eq:rank-four-linear-decrease}
\end{equation}

Direct differentiation of the explicit formula for \(f\) shows that the
maximum absolute row sum of its Hessian is less than \(21\) throughout the
four local boxes.  Apply the one-variable Taylor theorem to
\(\phi(t)=f(t\vect{y})\).  The segment from \(\vect{0}\) to \(\vect{y}\) remains
inside the same local box, so the Hessian bound and
\(\lVert\vect{y}\rVert_\infty\leq\varepsilon_0\) give
\begin{equation}
  |f(\vect{y})-f(\vect{0})-\mathcal L(\vect{y})|
  \leq\frac{21}{2}\varepsilon_0\lVert\vect{y}\rVert_1.
  \label{eq:rank-four-objective-remainder}
\end{equation}
At every candidate center,
\(f(\vect{0})=\log T\).  Combining
\eqref{eq:rank-four-linear-decrease} and
\eqref{eq:rank-four-objective-remainder}, and substituting
\(\varepsilon_0=1/1024\), gives
\[
  \log\frac{\delta(\vect{p})}{T}
  \leq
  -\frac{751077}{202496000}\lVert\vect{y}\rVert_1\leq0.
\]
Equality is possible only when \(\vect{y}=\vect{0}\), which is the center
\(\vect{p}_{\eta,\theta}\).
\end{proof}

\begin{proof}[Proof of Theorem~\ref{thm:rank-four-exact}]
Lemma~\ref{lem:rank-four-lower-bound} shows that \(T\) is attained by an
HKZ-reduced basis.  Now let \(B\) be any four-dimensional HKZ-reduced
basis satisfying \(\Delta(B)\geq T\), and normalize \(d_1=1\).  After
suitable changes of basis-vector signs,
Lemma~\ref{lem:rank-four-compact-reduction} places the parameter vector
\(\vect{p}\) of \(B\) in one of the eight boxes \(\mathcal C_\sigma\).
Lemma~\ref{lem:rank-four-rational-cover} then places \(\vect{p}\) in one
of the four local boxes \(\mathcal N_{\eta,\theta}\), where
Lemma~\ref{lem:rank-four-local-maximum} gives \(\Delta(B)\leq T\) in these boxes.
Hence
\[
  D_4=T=\frac{4375}{1024}.
\]

Finally, the four equality cases identified by
Lemmas~\ref{lem:rank-four-compact-reduction}--\ref{lem:rank-four-local-maximum}
are related by unimodular changes of basis and all correspond, up to
similarity, to the lattice constructed in
Lemma~\ref{lem:rank-four-lower-bound}; hence the maximizing lattice for $D_4$ is
unique up to similarity.
\end{proof}

\begin{remark}
In every fixed dimension, HKZ reduction can be characterized by finitely many
inequalities, and Novikova \cite[Theorem~1 and Tables~1--3]{novikova1983}
gave explicit finite systems through dimension eight.  Thus the present 
approach may in principle be applied to higher-dimensional \(D_n\).
\end{remark}

\subsection*{Acknowledgements}
I am grateful to Professor Gang Tian for his
constant support and to Professor Chuanming Zong for introducing me to
lattice-based cryptography and, in particular, to this problem.  
During the preparation of this manuscript, the author used OpenAI Codex to
assist with organization and language editing.  The author reviewed and
verified the resulting text and assumes full responsibility for the content.

\clearpage
\appendix
\section{Code for the Computer-Assisted Verification}
\label{app:rational-covering-code}

The following self-contained Python~3 program implements the exact rational
covering used in Lemma~\ref{lem:rank-four-rational-cover}.  It uses only
modules from the Python standard library.  Running it without command-line
arguments verifies all eight initial boxes and prints the terminal counts
reported in the proof.  All interval endpoints and comparisons are represented
by \texttt{fractions.Fraction}; the program performs no floating-point
operations.

\renewcommand{\theFancyVerbLine}{%
  \appendixcodenumberfont\arabic{FancyVerbLine}}
\begin{Verbatim}[
  formatcom={\fontsize{6.5pt}{7.2pt}\selectfont\ttfamily},
  numbers=left,
  numbersep=6pt
]
#!/usr/bin/env python3
"""Exact rational covering certificate for the rank-four HKZ defect.

The verifier proves that every normalized rank-four HKZ form with defect at
least 4375/1024 lies in one of four radius-1/1024 neighborhoods of the
candidate corners.  The complementary local inequality and equality
classification are proved analytically in the paper.

No floating-point operation is used in this file.
"""

from __future__ import annotations

import argparse
import heapq
import itertools
import json
from dataclasses import dataclass
from fractions import Fraction
from pathlib import Path

Q = Fraction
TARGET = Q(4375, 1024)
LOCAL_RADIUS = Q(1, 1024)

X2 = [(0, 1)]
X3_NEW = [(0, 0, 1), (0, 1, 1), (1, 1, 1)]
X3 = [(values[0], values[1], 0) for values in X2] + X3_NEW
X4_NEW = [
    (-1, -1, 0, 1),
    (0, -1, 0, 1),
    (0, 0, 0, 1),
    (0, 1, 0, 1),
    (1, 1, 0, 1),
    (-1, -1, 1, 1),
    (0, -1, 1, 1),
    (-1, 0, 1, 1),
    (0, 0, 1, 1),
    (1, 0, 1, 1),
    (0, 1, 1, 1),
    (1, 1, 1, 1),
]
X4 = [tuple(values) + (0,) for values in X3] + X4_NEW

D2, D3, D4, M21, M31, M32, M41, M42, M43 = range(9)
MU_POSITIONS = {
    (1, 0): M21,
    (2, 0): M31,
    (2, 1): M32,
    (3, 0): M41,
    (3, 1): M42,
    (3, 2): M43,
}
SCALES = (Q(5, 4), Q(4, 3), Q(3, 2), Q(1, 2), Q(1), Q(1, 2), Q(1), Q(1), Q(1, 2))


@dataclass(frozen=True)
class Box:
    lower: tuple[Fraction, ...]
    upper: tuple[Fraction, ...]
    depth: int = 0


def add_interval(
    left: tuple[Fraction, Fraction],
    right: tuple[Fraction, Fraction],
) -> tuple[Fraction, Fraction]:
    return left[0] + right[0], left[1] + right[1]


def mul_interval(
    left: tuple[Fraction, Fraction],
    right: tuple[Fraction, Fraction],
) -> tuple[Fraction, Fraction]:
    products = (
        left[0] * right[0],
        left[0] * right[1],
        left[1] * right[0],
        left[1] * right[1],
    )
    return min(products), max(products)


def square_interval(
    interval: tuple[Fraction, Fraction],
) -> tuple[Fraction, Fraction]:
    lower, upper = interval
    minimum = Q(0) if lower <= 0 <= upper else min(lower * lower, upper * upper)
    return minimum, max(lower * lower, upper * upper)


def coefficient_interval(
    box: Box, first: int, values: tuple[int, ...], j: int
) -> tuple[Fraction, Fraction]:
    x = [0, 0, 0, 0]
    for index, value in enumerate(values, start=first):
        x[index] = value
    result = (Q(x[j]), Q(x[j]))
    for k in range(j + 1, 4):
        multiplier = x[k]
        if multiplier == 0:
            continue
        position = MU_POSITIONS[(k, j)]
        term = mul_interval(
            (Q(multiplier), Q(multiplier)),
            (box.lower[position], box.upper[position]),
        )
        result = add_interval(result, term)
    return result


def projected_constraint_upper(
    box: Box, first: int, values: tuple[int, ...]
) -> Fraction:
    d_intervals = (
        (Q(1), Q(1)),
        (box.lower[D2], box.upper[D2]),
        (box.lower[D3], box.upper[D3]),
        (box.lower[D4], box.upper[D4]),
    )
    total = (Q(0), Q(0))
    for j in range(first, 4):
        coefficient = coefficient_interval(box, first, values, j)
        total = add_interval(
            total,
            mul_interval(d_intervals[j], square_interval(coefficient)),
        )
    return total[1] - d_intervals[first][0]


def infeasible(box: Box) -> bool:
    if box.upper[D2] < Q(3, 4):
        return True
    if box.upper[D3] < Q(3, 4) * box.lower[D2]:
        return True
    if box.upper[D3] < Q(2, 3):
        return True
    if box.upper[D4] < Q(3, 4) * box.lower[D3]:
        return True
    if box.upper[D4] < Q(2, 3) * box.lower[D2]:
        return True
    if box.upper[D4] < Q(1, 2):
        return True
    for first, vectors in ((0, X4), (1, X3), (2, X2)):
        for values in vectors:
            if projected_constraint_upper(box, first, values) < 0:
                return True
    return False


def square_upper(box: Box, position: int) -> Fraction:
    lower, upper = box.lower[position], box.upper[position]
    return max(lower * lower, upper * upper)


def defect_upper(box: Box) -> Fraction:
    factor2 = Q(1) + square_upper(box, M21) / box.lower[D2]
    factor3 = Q(1) + (
        square_upper(box, M31)
        + box.upper[D2] * square_upper(box, M32)
    ) / box.lower[D3]
    factor4 = Q(1) + (
        square_upper(box, M41)
        + box.upper[D2] * square_upper(box, M42)
        + box.upper[D3] * square_upper(box, M43)
    ) / box.lower[D4]
    return factor2 * factor3 * factor4


def split_position(box: Box) -> int:
    return max(
        range(9),
        key=lambda position: (
            (box.upper[position] - box.lower[position]) / SCALES[position],
            -position,
        ),
    )


def split(box: Box, position: int) -> tuple[Box, Box]:
    lower = list(box.lower)
    upper = list(box.upper)
    midpoint = (lower[position] + upper[position]) / 2
    left_upper = upper.copy()
    left_upper[position] = midpoint
    right_lower = lower.copy()
    right_lower[position] = midpoint
    return (
        Box(tuple(lower), tuple(left_upper), box.depth + 1),
        Box(tuple(right_lower), tuple(upper), box.depth + 1),
    )


def initial_box(signs: str) -> Box:
    lower = [Q(3, 4), Q(2, 3), Q(1, 2), -Q(1, 2), -Q(1, 2), -Q(1, 2), -Q(1, 2), -Q(1, 2), -Q(1, 2)]
    upper = [Q(2), Q(2), Q(2), Q(0), Q(1, 2), Q(0), Q(1, 2), Q(1, 2), Q(0)]
    for position, sign in zip((M31, M41, M42), signs):
        if sign == "+":
            lower[position] = Q(0)
        else:
            upper[position] = Q(0)
    return Box(tuple(lower), tuple(upper))


def inside_candidate_neighborhood(box: Box, signs: str) -> bool:
    if signs[2] != "-":
        return False
    center = [Q(4, 5), Q(4, 5), Q(3, 5), -Q(1, 2), Q(0), -Q(1, 2), Q(0), Q(0), -Q(1, 2)]
    for position, sign in zip((M31, M41, M42), signs):
        center[position] = Q(1, 2) if sign == "+" else -Q(1, 2)
    return all(
        box.lower[position] >= value - LOCAL_RADIUS
        and box.upper[position] <= value + LOCAL_RADIUS
        for position, value in enumerate(center)
    )


def verify_region(signs: str, max_boxes: int, report_every: int) -> dict[str, int | str]:
    root = initial_box(signs)
    queue: list[tuple[Fraction, int, Box]] = [(-defect_upper(root), 0, root)]
    serial = 0
    processed = 0
    pruned_infeasible = 0
    pruned_objective = 0
    retained_local = 0
    maximum_depth = 0

    while queue:
        if processed >= max_boxes:
            raise RuntimeError(f"{signs}: exceeded max-boxes={max_boxes}")
        negative_bound, _, box = heapq.heappop(queue)
        processed += 1
        maximum_depth = max(maximum_depth, box.depth)
        if -negative_bound < TARGET:
            pruned_objective += 1
            continue
        if infeasible(box):
            pruned_infeasible += 1
            continue
        if inside_candidate_neighborhood(box, signs):
            retained_local += 1
            continue
        position = split_position(box)
        for child in split(box, position):
            serial += 1
            heapq.heappush(queue, (-defect_upper(child), serial, child))
        if report_every and processed % report_every == 0:
            print(
                f"{signs}: processed={processed} open={len(queue)}"
            )

    return {
        "signs": signs,
        "processed": processed,
        "pruned_infeasible": pruned_infeasible,
        "pruned_objective": pruned_objective,
        "retained_local": retained_local,
        "maximum_depth": maximum_depth,
    }


def main() -> None:
    parser = argparse.ArgumentParser()
    parser.add_argument(
        "--signs",
        action="append",
        choices=["".join(s) for s in itertools.product("+-", repeat=3)],
        help="repeat to verify selected regions; default verifies all eight",
    )
    parser.add_argument("--max-boxes", type=int, default=2_000_000)
    parser.add_argument("--report-every", type=int, default=100_000)
    parser.add_argument("--write-summary", type=Path)
    args = parser.parse_args()

    regions = args.signs or [
        "".join(signs) for signs in itertools.product("+-", repeat=3)
    ]
    summaries = [
        verify_region(signs, args.max_boxes, args.report_every)
        for signs in regions
    ]
    output = {
        "arithmetic": "fractions.Fraction",
        "target": f"{TARGET.numerator}/{TARGET.denominator}",
        "objective_pruning": "strictly below target",
        "local_radius": (
            f"{LOCAL_RADIUS.numerator}/{LOCAL_RADIUS.denominator}"
        ),
        "regions": summaries,
        "all_regions_closed": len(summaries) == 8,
    }
    rendered = json.dumps(output, indent=2)
    print(rendered)
    if args.write_summary:
        args.write_summary.write_text(rendered + "\n", encoding="utf-8")


if __name__ == "__main__":
    main()
\end{Verbatim}

\end{document}